\newcommand{\R}{\mathbb{R}}
\newcommand{\N}{\mathbb{N}}
\newcommand{\Z}{\mathbb{Z}}
\newcommand{\Q}{\mathbb{Q}}
\newcommand{\T}{\mathbb{T}}
\newcommand{\e}{\epsilon}
\newcommand{\h}{\eta}
\newcommand{\kk}{\kappa}
\newcommand{\sub}{\subseteq}
\newcommand{\lav}{\left|}
\newcommand{\rav}{\right|}
\newcommand{\lf}{\left\lfloor}
\newcommand{\rf}{\right\rfloor}
\newcommand{\ldav}{\left| \left|}
\newcommand{\rdav}{\right| \right|}
\newcommand{\ldb}{\left[\!\left[}
\newcommand{\rdb}{\right]\!\right]}
\newcommand{\oo}{\infty}
\newcommand{\lc}{\left(} 
\newcommand{\rc}{\right)}
\newcommand{\lb}{\left[}
\newcommand{\rb}{\right]}
\newcommand{\lfp}{\left\{}
\newcommand{\rfp}{\right\}}
\newcommand{\mc}{\mathcal}
\numberwithin{Theor}{section}
\numberwithin{theorem}{section}
\numberwithin{proposition}{section}
\numberwithin{lemma}{section}
\numberwithin{Conjecture}{section}
\numberwithin{definition}{section}
\numberwithin{Corollary}{section}
\numberwithin{Remark}{section}
\numberwithin{note}{section}
\numberwithin{Example}{section}
\title{Density of Oscillating Sequences in the Real Line}
\author{Ioannis Tsokanos}
\date{ }
\begin{document}

\maketitle

\begin{abstract}
In this paper we study the density in the real line of oscillating sequences of the form
$$\lc g(k)\cdot F(k\alpha) \rc_{k \in \N} ,$$
where $g$ is a positive increasing function and $F$  a real continuous 1-periodic function.
This extends work by Berend, Boshernitzan and Kolesnik [Distribution Modulo $1$ of Some Oscillating Sequences I-III] who established differential properties on the function $F$ ensuring that the oscillating sequence is dense modulo $1$. 

More precisely, when $F$ has finitely many  roots in $[0,1)$, we provide necessary and also sufficient conditions for the oscillating sequence under consideration to be dense in $\R$.  All the results are stated in terms of the Diophantine properties of $\alpha$,  with the help of the theory of continued fractions. 
\end{abstract}

\par 

\section{Introduction}

Given a real number $x$, denote by $\lfp x \rfp_{2}$ the signed fractional part of $x$, which is  the unique  real number in $\lb -{1 \over 2}, {1 \over 2} \rc$ such that  $x - \lfp x\rfp_{2} \in \Z$. Similarly, $\lfp x \rfp $ stands for the fractional part of $x$.  Denote by $\ldav x \rdav $ its distance from the nearest integer: $ \ldav x \rdav =  \min_{n \in \Z} \lav x - n \rav .$
Let also $\ldb a,b \rdb$ be the integer interval with end points determined by the real numbers $a$ and $b$; that is $ \ldb a,b \rdb = \lfp n \in \Z: a \le n \le b \rfp .$  
Finally, we will make use of Landau's little-$o$ notation: if, given two real functions $w,v:\R^{+}\mapsto \R^{+}$, it holds that $ w(x)/v(x) \to 0$ as $x \to +\oo$ (respectively, as $x\to 0$), then one may write  $w(x) = o\lc v(x) \rc$ as $x \to +\oo$ (respectively, as $x\to 0$).

It is asked in \cite{MathStackExchangeDensityOfOscillatingSequences} whether the sequence 
$ \lc k \cdot \sin\lc k \rc \rc_{k \in \N} $ is dense in $\R$. More generally, it is natural to determine the values of the parameters $\beta \;>0$ and $\alpha \in \R$  for which the oscillating sequence $\lc k^{\beta}\cdot \sin\lc 2\pi\cdot k\alpha \rc \rc_{k \in \N} $ 
is dense in $\R$. In this paper we answer this question by studying the density properties in $\R$ of the more general class of oscillating sequences of the form 
\begin{equation}\label{eqOscillatingSequences}
\lc g(k)\cdot F\lc k\alpha \rc \rc_{k \in \N} , 
\end{equation}
where
\begin{equation}\label{eqGrowthRate}
g(t) = t^{\beta} + o\lc t^{\beta} \rc \quad \text{as } t \to +\oo
\end{equation}
for some $\beta \;> 0 $, and where the function $F$ is a real, $1$-periodic, continuous function with only isolated roots. We assume further that, if $r \in \R$ is a root of $F$, then $F$ has the form  
\begin{equation}\label{eqPeriodicFunction}
F\lc r+x \rc =  c_{r} \cdot \e(x) \cdot \lav x \rav^{\gamma(r)} + o\lc \lav x \rav^{\gamma(r)} \rc \quad \text{as } x \to 0 
\end{equation}
for some $\gamma(r) \;> 0 $ and some $c_{r} \in \R\backslash \lfp 0 \rfp$. 
Here, the function $\e: \R \mapsto \lfp -1, 0, 1 \rfp $ stands for the sign function 
$$ \e(x) = 
\begin{cases}
1, \quad \text{if }  x \;> 0 \\
0, \quad \text{if }  x = 0 \\
-1, \quad \text{if } x \;< 0 .
\end{cases} $$

A study of the density of oscillating sequences in the torus $\T = \R/\Z$ has been made by Berend, Boshernitzan and Kolesnik (see \cite{BerendDistributionModulo1ofsomeOscillatingSequences1,BerendDistributionModulo1ofsomeOscillatingSequences2,BerendDistributionModulo1ofsomeOscillatingSequences3}). In this body of work, the authors consider oscillating sequences of the form 
\begin{equation}\label{eqToralOscillatingSequences} 
\lc P(k)\cdot f\lc Q(k) \rc \rc_{k \in \N} ,
\end{equation}
where $P,Q$ are polynomials  and $f$  is a (highly differentiable) periodic function with period $T \;>0$. In particular, they consider three aspects of the problem: the problem of small values modulo $1$ of such sequences, that of their density modulo $1$, and that of  their uniform distribution.

More precisely, in \cite{BerendDistributionModulo1ofsomeOscillatingSequences1}, the authors deal with the above-stated problems by providing in each case sufficient conditions on the degree of differentiability of the function $f$ at the point $Q(0)$ for the related properties to hold. In \cite{BerendDistributionModulo1ofsomeOscillatingSequences2}, they generalise the results regarding the small values and the density of the sequence \eqref{eqToralOscillatingSequences} in two directions. On the one hand, they allow the function $f$ to  be quasi-periodic, that is $f(x) = f_{0}(x,x,...,x),$ where $f_{0}: \R^{n} \to \R$ is a periodic function of several variables. On the other hand, they study  a more general family of sequences, namely sequences of the form $\lc P(k)\cdot f\lc Q(k) \rc \cdot g\lc R(k) \rc \rc_{k \in \N}$, where $R(k)$ is a polynomial and the function $g$ is periodic.  For instance, they prove that, given integers $d$ and $l$, there exists  $r = r(d,l)$ having the following properties: for any polynomial $P$ of degree $d$, any function $f$ with $f^{(s)}(0) \neq 0$ for some $s\ge r$ and any real number $\alpha$ with ${\alpha \over T}$ irrational, the sequence $\lc P(k)\cdot f\lc k^{l}\cdot {\alpha \over T} \rc \rc_{k \in \N}$ is dense modulo $1$.

Other results regarding the distribution of the sine function in the real line are given for instance in \cite{AdiceamRationalApprximationArithmeticProgressions}. In this paper, Adiceam exploits a result concerning rational approximations of irrationals with the numerators and denominators of the rational approximants restricted to prescribed arithmetic progressions, and proves that  for every $\rho \in \R$ and irrational $\alpha$,  it holds that
$$ \limsup_{k \to +\oo}\lc \sin\lc 2\pi k\alpha + \rho \rc \rc^{k} = -\liminf_{k \to +\oo}\lc \sin\lc 2\pi k\alpha + \rho \rc \rc^{k} = 1 .$$

Our approach to study the sequence \eqref{eqOscillatingSequences} makes a connection between its density properties in $\R$ and the density properties of auxiliary sequences of the form 
\begin{equation}\label{eqFractionalParts}
\lc k^{\beta}\cdot \lfp k\alpha - \rho \rfp_{2} \rc_{k \in \N} ,
\end{equation}
where $\rho$ is a real number (see Proposition \ref{PropOscillatingSequencesAndFractionalParts} in Section \ref{SectionAuxiliaryResults} below for details). Working with the signed fractional part instead of the distance from the nearest integer, which may seem more natural, is a consequence of working in the real line as we have to consider separately 	the positive and the negative values of the function \eqref{eqPeriodicFunction}.

Little seems to be known regarding the  density of oscillating sequences in  the real line. One of the goals of this paper is to relate the density of \eqref{eqOscillatingSequences} with the approximation properties of $\alpha$. Here, by approximation properties  we are referring to the irrationality measure  $\mu(\alpha)$ of $\alpha$: 
$$ \mu(\alpha) = \sup\lfp
\begin{split}
 v \;> 0 : \lav \alpha - {p\over q} \rav \le {1 \over q^{v} } \text{ holds for infinitely many rationals ${p\over q}$ } \\  \text{with $ \gcd(p,q) =1 $} \end{split}\rfp .$$
It can readily be checked that every rational number $r $ has irrationality measure $\mu(r) = 1$ while, from Dirichlet's theorem in Diophantine approximation, for every irrational $x$  it holds that $\mu(x) \ge 2$. 
We consider more precisely some additional quantities which refine the notion of irrationality measure. To define them, we first recall the continued fraction expansion and 
the Ostrowski expansion  of a real number. 
Throughout this paper, the continued fraction expansion \cite[Section 3.1]{BeresnevichSumsOfReciprocals} of every $\alpha \in \R\backslash \Q$  is denoted by 
$$ \alpha = \lb a_{0};a_{1},...,a_{n}...\rb = a_{0}   +\cfrac{1}{ a_{1} +\cfrac{1}{ a_{2} + \cfrac{1}{ a_{3} + \dots}}}      $$
and the sequence of the denominators of the convergents of $\alpha$ by $\lc q_{n} \rc_{n \in \N}$. 
Given an irrational $\alpha= \lb a_{0};a_{1},a_{2},...\rb$ and a real number $\rho$, the Ostrowski expansion \cite[Section 3.2, Lemma 3.2]{BeresnevichSumsOfReciprocals} of $\rho$ with base $\alpha$ is the unique choice of natural numbers $\lfp e_{n}(\rho)\rfp_{n \in \N_{0}}$ and of an integer $\rho_{0}$ such that 
\begin{equation}\label{eqOstrowskiExpansion}
\rho  = \rho_{0} +  e_{0}(\rho)\cdot \lfp \alpha \rfp + \sum_{n=1}^{+\oo} e_{n}(\rho)\cdot \lfp q_{n}\alpha \rfp_{2} ,
\end{equation}
where $ \rho - \rho_{0} \in \lb -\alpha, 1-\alpha \rc$, $ e_{0}(\rho)\in\ldb 0, a_{1} -1 \rdb$ and $ e_{n}(\rho) \in \ldb 0 ,a_{n+1} \rdb $ for every $ n \ge 1$,
with $ e_{n}(\rho) = 0$ whenever $ e_{n+1}(\rho) = a_{n+2}$ for $ n \ge 1$.

\begin{definition}[Signed Irrationality Evaluation]\label{DefSignedIrrationalityEvaluation}
	Given an irrational number $\alpha \in \R\backslash\Q$,  a positive real number $\beta \;> 0$ and a real number $\rho\in \R$, denote by $\mu_{+}\lc \alpha,\beta,\rho \rc$ and $\mu_{-}\lc \alpha,\beta,\rho \rc$  the quantities 
	$$ \mu_{+}\lc \alpha,\beta,\rho\rc = \liminf_{\substack{k \to +\oo,\\ \lfp k\alpha - \rho \rfp_{2} \;> 0} } k^{\beta}\cdot \lfp k\alpha -\rho \rfp_{2} \ge 0   $$
	and 
	$$ \mu_{-}\lc \alpha,\beta,\rho\rc = \liminf_{\substack{k \to +\oo,\\ \lfp k\alpha - \rho \rfp_{2} \;< 0} } - k^{\beta}\cdot \lfp k\alpha -\rho \rfp_{2} \ge 0   .$$
	Moreover, denote by $\mu(\alpha,\beta,\rho)  = \min\lfp \mu_{+}(\alpha,\beta,\rho), \mu_{-}(\alpha,\beta,\rho) \rfp$ the minimum of the above two quantities. 	When $\rho=0$, we may write $\mu_{+}(\alpha,\beta),\mu_{-}(\alpha,\beta)$ and $\mu(\alpha,\beta) $ to simplify notation.
\end{definition}

Given the Ostrowski expansion \eqref{eqOstrowskiExpansion} of $\rho$, set further 
 $$
 \tau_{+}(\alpha,\beta,\rho) =  
 \liminf_{ \substack{ n \to +\oo }} \max\lfp 1, \min\lfp e_{2n}(\rho)^{\beta}, \lc a_{2n+1} - e_{2n}(\rho) \rc^{\beta +1 \over 2}\rfp \rfp \cdot q_{2n}^{\beta}\cdot \lfp q_{2n}\alpha \rfp_{2}  \ge 0
 $$
and
$$  \tau_{-}(\alpha,\beta,\rho) =
  \liminf_{ \substack{ n \to +\oo }} - \max\lfp 1, \min\lfp e_{2n-1}(\rho)^{\beta}, \lc a_{2n} - e_{2n-1}(\rho) \rc^{\beta +1 \over 2}\rfp \rfp \cdot q_{2n-1}^{\beta}\cdot \lfp q_{2n-1}\alpha \rfp_{2}  \ge 0   .  $$

Our main result provides necessary and also sufficient conditions on the oscillating sequence \eqref{eqOscillatingSequences} to be dense in $\R$.

\begin{theorem}\label{TheorOscillatingSequence}
	Denote by $\lc y_{k} \rc_{k \in \N}$ the sequence defined in \eqref{eqOscillatingSequences}. Let the function $F$ satisfy assumption \eqref{eqPeriodicFunction} and let $g$ satisfy assumption \eqref{eqGrowthRate}.
	\begin{enumerate}
		\item  If the sequence \eqref{eqOscillatingSequences} is dense in $\R^{+}$ (resp. in $\R^{-}$) then there exists a root $r$ of $F$ such that either  $c_{r} \;>0 $ (resp. $c_{r} \;< 0$) and $\mu_{+}\lc \alpha,{\beta \over \gamma(r)}, r\rc =0 $,  or else  $c_{r}\;< 0$ (resp. $c_{r} \;>0$)  and $\mu_{-}\lc \alpha,{\beta \over \gamma(r)}, r\rc =0$. Moreover, if the root $r$ is rational then this condition is also sufficient. 
		
		\item If there exists a root $r$ of $F$ such that either $c_{r} \;>0 $ (resp. $c_{r}\;< 0$) and $\tau_{+}\lc \alpha,{\beta \over \gamma(r)}, r \rc = 0$, or else $c_{r} \;<0 $ (resp. $c_{r}\;> 0$) and $\tau_{-}\lc \alpha,{\beta \over \gamma(r)}, r \rc = 0$, then the sequence \eqref{eqOscillatingSequences} is dense in $\R^{+}$ (resp. in $\R^{-}$).		
	\end{enumerate}
\end{theorem}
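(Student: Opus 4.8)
The plan is to transfer the whole question to the auxiliary sequences of \eqref{eqFractionalParts} through Proposition \ref{PropOscillatingSequencesAndFractionalParts}, and then to analyse those by continued-fraction/Ostrowski methods. Fix a root $r$ of $F$, abbreviate $\kappa = \beta/\gamma(r)$, and write $x_k = \lfp k\alpha - r\rfp_2$ and $s_k = k^{\kappa} x_k$. For those $k$ with $k\alpha$ close to $r$, assumptions \eqref{eqPeriodicFunction} and \eqref{eqGrowthRate} give
$$ y_k = g(k)F(k\alpha) = c_r\, \e(x_k)\, \lc k^{\kappa}\lav x_k\rav\rc^{\gamma(r)}\lc 1+o(1)\rc = c_r\,\e(s_k)\,\lav s_k\rav^{\gamma(r)}\lc 1+o(1)\rc , $$
using $\kappa\gamma(r)=\beta$ and $k^{\kappa}\lav x_k\rav = \lav s_k\rav$. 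Since $s\mapsto c_r\,\e(s)\,\lav s\rav^{\gamma(r)}$ is an increasing homeomorphism of $\R$ carrying the appropriate half-line onto $\R^+$ (resp. $\R^-$), density of $(y_k)$ in $\R^+$ becomes, via the Proposition, density in $\R^+$ of the positive part of $(s_k)$ when $c_r>0$, and density in $\R^-$ of the negative part of $(s_k)$ when $c_r<0$. The first step is to record this dictionary together with the elementary but crucial remark that whenever $k\alpha$ stays a fixed distance from every root one has $\lav y_k\rav \gg g(k)\to\infty$; hence only $k$ with $k\alpha$ near a root contribute bounded values, and the problem localises around the finitely many roots.

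For the necessity in part (1), suppose $(y_k)$ is dense in $\R^+$. Then its positive values accumulate at $0$, and by the localisation these small positive values are produced only by $k$ with $k\alpha$ near some root $r$, the sign of $x_k$ being forced by the sign of $c_r$. Reading the dictionary backwards, this forces $\liminf k^{\kappa}\lav x_k\rav = 0$ along the admissible sign, i.e. $\mu_{+}(\alpha,\kappa,r)=0$ when $c_r>0$ and $\mu_{-}(\alpha,\kappa,r)=0$ when $c_r<0$. Contrapositively, if every root failed the stated sign/evaluation condition, then $\inf\lfp y_k : y_k>0\rfp>0$, an interval $(0,\delta)$ would be avoided, and density would fail.

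For the sufficiency when $r=p/q$ is rational I would argue by dilation. In the case $c_r>0$, pick from $\mu_{+}(\alpha,\kappa,r)=0$ integers $k_n$ with $x_{k_n}=\delta_n>0$ and $s_{k_n}=k_n^{\kappa}\delta_n\to 0$. For $\ell\equiv 1\pmod q$ the identity $\ell k_n\alpha - r = \ell(k_n\alpha-r)+(\ell-1)r$ with $(\ell-1)r\in\Z$ gives $\lfp \ell k_n\alpha - r\rfp_2 = \ell\delta_n$ as long as $\ell\delta_n\le\tfrac12$, whence $s_{\ell k_n}=\ell^{\,\kappa+1}s_{k_n}$. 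As $\ell$ runs through $1,q+1,2q+1,\dots$ these values form a multiplicatively finer and finer grid (consecutive ratio $\to 1$) issuing from $s_{k_n}\to 0$, and the no-wraparound condition $\ell\delta_n\to 0$ is automatic for the $\ell\asymp(t/s_{k_n})^{1/(\kappa+1)}$ needed to approximate a fixed target $t$. Hence the positive part of $(s_k)$ is dense in $\R^+$; the case $c_r<0$ is symmetric.

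The heart of the paper is part (2), where $r$ may be irrational and the clean dilation is unavailable, so I would build the required $k$ from the Ostrowski expansion \eqref{eqOstrowskiExpansion} of $r$. Taking $c_r>0$ and a sequence of even indices $2n$ along which the $\tau_{+}$-product tends to $0$, let $b_n=\sum_{0\le j<2n}e_j(r)q_j$ be the truncated Ostrowski sum and consider the perturbations $k=b_n+m q_{2n}$. With $\lfp q_{2n}\alpha\rfp_2=\theta_{2n}>0$ one has $\lfp k\alpha - r\rfp_2 \approx (m-e_{2n}(r))\theta_{2n}+e_{2n+1}(r)\theta_{2n+1}$, so that $m\ge e_{2n}(r)$ yields positive $s_k=k^{\kappa}\lfp k\alpha - r\rfp_2$ of base scale $q_{2n}^{\kappa}\theta_{2n}$. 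The two branches of the inner minimum reflect two regimes: taking $m$ comparable to $e_{2n}(r)$ makes $k\asymp e_{2n}(r)q_{2n}$ and produces the factor $e_{2n}(r)^{\kappa}$, while exploiting the remaining room $m-e_{2n}(r)\lesssim a_{2n+1}-e_{2n}(r)$ and balancing the number of admissible steps against the per-step increment $q_{2n}^{\kappa}\theta_{2n}$ produces the factor $(a_{2n+1}-e_{2n}(r))^{(\kappa+1)/2}$, the square root being exactly the cost of that optimisation; the outer $\max\lfp 1,\cdot\rfp$ absorbs the case of small digit and small room. The main obstacle, and the reason the exponents in $\tau_{+}$ take precisely this shape, is to upgrade accumulation at $0$ to genuine density: one must verify that along the chosen subsequence the attainable $s_k$ form a grid whose gaps shrink relative to each target $t$ while $\lfp k\alpha-r\rfp_2$ remains in the regime where \eqref{eqPeriodicFunction} applies (no wraparound, controlled Ostrowski tail). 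Establishing this balance is the quantitative content of $\tau_{+}=0$; once it is in place the Proposition converts density of the positive part of $(s_k)$ in $\R^+$ into density of $(y_k)$ in $\R^+$, and the cases with $c_r<0$ (and density in $\R^-$) follow by the symmetric use of odd indices in $\tau_{-}$.
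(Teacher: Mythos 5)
Your reduction via Proposition \ref{PropOscillatingSequencesAndFractionalParts}, your necessity argument (localisation near a root plus the sign of $c_{r}$), and your dilation $k\mapsto (lq+1)k$ for a rational root are all correct and coincide with the paper's treatment; the last of these is exactly the proof of Proposition \ref{PropRationalValuesFractionalParts}. The gap is in part (2), in the regime governed by the \emph{second} branch of the minimum in $\tau_{+}$. Keep your notation $\kappa=\beta/\gamma(r)$, $s_{k}=k^{\kappa}\lfp k\alpha-r\rfp_{2}$, and write $\theta_{2n}=\lfp q_{2n}\alpha\rfp_{2}\;>0$ and $\e_{n}=q_{2n}^{\kappa}\theta_{2n}$. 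Your family $k=b_{n}+mq_{2n}$ gives $\lfp k\alpha-r\rfp_{2}\approx (m-e_{2n}(r))\theta_{2n}$, so every $k$ in it producing a positive fractional part on the scale $\theta_{2n}$ has $k\ge e_{2n}(r)\,q_{2n}$, and hence $s_{k}\gtrsim e_{2n}(r)^{\kappa}\e_{n}$, with consecutive values spaced by at least that amount. This family therefore yields a grid of positive values accumulating at $0$ only under the first-branch condition $\liminf e_{2n}^{\kappa}\e_{n}=0$. If $e_{2n}^{\kappa}\e_{n}$ stays bounded below while $\lc a_{2n+1}-e_{2n}\rc^{(\kappa+1)/2}\e_{n}\to 0$, all your $s_{k}$ stay bounded away from $0$: the ``remaining room'' $m-e_{2n}\le a_{2n+1}-e_{2n}$ only pushes $s_{k}$ upward and cannot create small values, so no optimisation over $m$ produces the factor $\lc a_{2n+1}-e_{2n}\rc^{(\kappa+1)/2}$.

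The missing idea is the paper's Case 2 family $k=\sum_{j<2n}e_{j}q_{j}+lq_{2n}-q_{2n-1}$. By the identity $\lfp q_{2n-1}\alpha\rfp_{2}+a_{2n+1}\lfp q_{2n}\alpha\rfp_{2}=\lfp q_{2n+1}\alpha\rfp_{2}$, subtracting $q_{2n-1}$ adds $a_{2n+1}$ positive steps of $\theta_{2n}$ to the fractional part while keeping $k\asymp lq_{2n}$ rather than $\asymp e_{2n}q_{2n}$; one gets $s_{k}\approx l^{\kappa}\lc l+a_{2n+1}-e_{2n}\rc\e_{n}$, whose smallest value is $\asymp\lc a_{2n+1}-e_{2n}\rc\e_{n}$ and whose gap near a target $h$ contains the term $\lc a_{2n+1}-e_{2n}\rc l^{\kappa-1}\e_{n}\ll h^{(\kappa-1)/(\kappa+1)}\lc \lc a_{2n+1}-e_{2n}\rc^{(\kappa+1)/2}\e_{n}\rc^{2/(\kappa+1)}$ for $l\ll (h/\e_{n})^{1/(\kappa+1)}$. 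That computation, not a balancing within your family, is the origin of the exponent $(\kappa+1)/2$. As written, your argument establishes part (2) only with $\tau_{\pm}$ replaced by the first branch of the minimum; you need to add the shifted family (or an equivalent device) to cover the full hypothesis.
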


Under the assumptions of Theorem \ref{TheorOscillatingSequence}, the density of the oscillating sequence \eqref{eqOscillatingSequences} depends only on the local properties of $F$ around its isolated roots. In order to establish Theorem \ref{TheorOscillatingSequence}, we first prove the  results for the auxiliary sequence \eqref{eqFractionalParts}. Thus, in Section \ref{SectionRationalValues}, we prove that if $\rho$ is rational, then the sequence \eqref{eqFractionalParts} is dense in $\R^{+}$ (resp. $\R^{-}$) if and only if $\mu_{+}\lc \alpha,\beta, \rho \rc = 0$ (resp. $\mu_{-}\lc \alpha,\beta, \rho \rc = 0$). In Section \ref{SectionRealValues}, we will exploit the Ostrowski expansion in order to prove that, if $\tau_{+}\lc \alpha,\beta, \rho\rc =0 $ (resp. $\tau_{-}\lc \alpha,\beta, \rho\rc =0 $), then the squence \eqref{eqFractionalParts} is dense in $\R^{+}$ (resp. in $\R^{-}$).

In the special case where $F(x) = \sin\lc 2\pi \cdot x \rc$, we obtain the following  corollary answering the opening question of the paper.

\begin{corollary}\label{CorTrigonometricOscillatingSequence}
	Given $\beta \;>0$  and $\alpha \in \R\backslash \Q$, the sequence
	$$ \lc k^{\beta}\cdot \sin\lc 2\pi\cdot k\alpha \rc \rc_{k \in \N} $$
	is dense in $\R$  if and only if at least one of the following holds:
	\begin{enumerate}
		\item $\mu_{+}(\alpha,\beta) = 0$ and $\mu_{-}(\alpha,\beta) = 0$,
		\item $\mu_{+}\lc \alpha,\beta\rc = 0 $ and $\mu_{+}\lc \alpha,\beta,{1 \over 2} \rc  =0 $,
		\item $\mu_{-}(\alpha,\beta) = 0$ and $\mu_{-}\lc \alpha,\beta,{1\over 2} \rc = 0 $.
	\end{enumerate}	
	
\end{corollary}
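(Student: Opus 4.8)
The plan is to specialise Theorem~\ref{TheorOscillatingSequence} to the case $F(x) = \sin(2\pi x)$ and then to reconcile the resulting two--sided density criterion with the three conditions in the statement by means of an elementary doubling identity.

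First, I would record the local data of $F$. The function $F(x) = \sin(2\pi x)$ is continuous and $1$--periodic, and its only roots in $[0,1)$ are $r = 0$ and $r = {1\over 2}$, both rational. Taylor expansion gives $\sin(2\pi x) = 2\pi x + o(|x|)$ as $x \to 0$, and $\sin(2\pi({1\over 2}+x)) = -\sin(2\pi x) = -2\pi x + o(|x|)$, so that assumption \eqref{eqPeriodicFunction} holds with $c_{0} = 2\pi > 0$, $c_{1/2} = -2\pi < 0$ and $\gamma(0) = \gamma({1\over 2}) = 1$. In particular ${\beta \over \gamma(r)} = \beta$ at both roots, and $g(t) = t^{\beta}$ satisfies \eqref{eqGrowthRate}.

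Second, since both roots are rational, Theorem~\ref{TheorOscillatingSequence}(1) provides an ``if and only if'' characterisation of one--sided density. Inserting the signs of $c_{0}$ and $c_{1/2}$, the sequence is dense in $\R^{+}$ exactly when $\mu_{+}(\alpha,\beta) = 0$ or $\mu_{-}(\alpha,\beta,{1\over 2}) = 0$, and dense in $\R^{-}$ exactly when $\mu_{-}(\alpha,\beta) = 0$ or $\mu_{+}(\alpha,\beta,{1\over 2}) = 0$. Since a sequence is dense in $\R$ if and only if it is dense in both $\R^{+}$ and $\R^{-}$ (their union being dense in $\R$ and the closure being closed), the sequence is dense in $\R$ if and only if
$$ \left( \mu_{+}(\alpha,\beta) = 0 \ \text{ or }\ \mu_{-}(\alpha,\beta,{1\over 2}) = 0 \right) \ \text{ and }\ \left( \mu_{-}(\alpha,\beta) = 0 \ \text{ or }\ \mu_{+}(\alpha,\beta,{1\over 2}) = 0 \right). $$

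Third --- and this is the crux --- I would relate the two quantities at $\rho = {1\over 2}$ to those at $\rho = 0$. The key observation is the identity $\lfp 2k\alpha \rfp_{2} = 2 \lfp k\alpha - {1\over 2} \rfp_{2}$, valid whenever $| \lfp k\alpha - {1\over 2} \rfp_{2} | < {1\over 4}$, both sides carrying the same sign. Consequently, given a sequence $k_{j} \to +\infty$ realising $\mu_{+}(\alpha,\beta,{1\over 2}) = 0$, so that $\lfp k_{j}\alpha - {1\over 2} \rfp_{2} > 0$ and $k_{j}^{\beta} \lfp k_{j}\alpha - {1\over 2} \rfp_{2} \to 0$, the doubled indices satisfy $\lfp 2k_{j}\alpha \rfp_{2} > 0$ and $(2k_{j})^{\beta} \lfp 2k_{j}\alpha \rfp_{2} = 2^{\beta+1} k_{j}^{\beta} \lfp k_{j}\alpha - {1\over 2} \rfp_{2} \to 0$, whence $\mu_{+}(\alpha,\beta) = 0$. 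The same computation with reversed signs shows that $\mu_{-}(\alpha,\beta,{1\over 2}) = 0$ forces $\mu_{-}(\alpha,\beta) = 0$. Writing $A, B, C, D$ for the four statements $\mu_{+}(\alpha,\beta) = 0$, $\mu_{-}(\alpha,\beta) = 0$, $\mu_{+}(\alpha,\beta,{1\over 2}) = 0$, $\mu_{-}(\alpha,\beta,{1\over 2}) = 0$ respectively, this reads $C \Rightarrow A$ and $D \Rightarrow B$. It then remains to check the purely logical equivalence of $(A \vee D) \wedge (B \vee C)$ with $(A \wedge B) \vee (A \wedge C) \vee (B \wedge D)$: the implication from the three listed conditions is immediate, while for the converse, expanding the conjunction gives $(A \wedge B) \vee (A \wedge C) \vee (B \wedge D) \vee (C \wedge D)$, and the extra term $C \wedge D$ already yields $A \wedge C$ via $C \Rightarrow A$. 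This establishes the corollary. I expect the doubling identity to be the only substantive ingredient; the rest is bookkeeping with Theorem~\ref{TheorOscillatingSequence} and elementary propositional logic.
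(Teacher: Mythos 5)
Your proposal is correct, and its overall skeleton coincides with the paper's: both specialise Theorem~\ref{TheorOscillatingSequence}(1) to the two rational roots $0$ and $\tfrac12$ of $\sin(2\pi x)$, obtain the criterion ``dense in $\R$ iff $\bigl(\mu_{+}(\alpha,\beta)=0 \vee \mu_{-}(\alpha,\beta,\tfrac12)=0\bigr) \wedge \bigl(\mu_{-}(\alpha,\beta)=0 \vee \mu_{+}(\alpha,\beta,\tfrac12)=0\bigr)$'', and then collapse this to the three listed conditions via the implications $\mu_{\pm}(\alpha,\beta,\tfrac12)=0 \Rightarrow \mu_{\pm}(\alpha,\beta)=0$. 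Where you genuinely diverge is in how that key implication is established. The paper deduces it from Theorem~\ref{TheorIrrationalityEvaluationRationalValues}: $\mu_{+}(\alpha,\beta,p/q)=0$ forces the liminf of $q_{m}^{\beta}\lfp q_{m}\alpha\rfp_{2}$ over even-indexed denominators with $q\mid q_{m}$ to vanish, hence a fortiori the unrestricted liminf, which by Lemma~\ref{LemIrrationalityEvaluationRationalValues} equals $\mu_{+}(\alpha,\beta)$. You instead use the elementary doubling identity $\lfp 2k\alpha\rfp_{2}=2\lfp k\alpha-\tfrac12\rfp_{2}$ (valid once $\lav\lfp k\alpha-\tfrac12\rfp_{2}\rav<\tfrac14$, sign included), which immediately transports a minimising sequence for $\mu_{\pm}(\alpha,\beta,\tfrac12)$ to one for $\mu_{\pm}(\alpha,\beta)$ at the cost of a factor $2^{\beta+1}$. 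Your route is more self-contained and applies uniformly for all $\beta>0$, whereas Theorem~\ref{TheorIrrationalityEvaluationRationalValues} is stated only for $\beta\ge1$ (the case $\beta<1$ being trivial since then everything is dense); on the other hand, the paper's route works for an arbitrary rational root $p/q$, while your doubling trick exploits the special circumstance that $2\cdot\tfrac12\in\Z$ and would need to be replaced by multiplication by $q$ (or $lq+1$, as in the proof of Proposition~\ref{PropRationalValuesFractionalParts}) for a general rational root. Your propositional bookkeeping at the end, including the absorption of the extra term $C\wedge D$ into $A\wedge C$, is accurate.
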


For instance, we can apply Corollary \ref{CorTrigonometricOscillatingSequence} when $\alpha$ is badly approximable; that is, when there exists $c \;> 0$  such that for every $k \in \N$, it holds $k\cdot \ldav k\alpha \rdav \ge c$. In this case,  for every $\beta \ge 1$, it holds that  $\mu(\alpha,\beta) \;>0 $ and therefore $\lc k^{\beta}\cdot \sin\lc 2\pi \cdot k\alpha \rc \rc_{k \in \N}$ is not dense in $\R$. Similarly, if $\beta \;<1$ it holds that $\mu_{\pm}(\alpha,\beta) = 0$ and the same sequence is dense in $\R$.

\begin{remark} From Definition \ref{DefSignedIrrationalityEvaluation}, it follows immediately that
\linebreak  $ \mu(\alpha,\beta,r) = \underset{k\to +\oo}{\liminf} \text{ } k^{\beta}\cdot \ldav k\alpha -r \rdav .$  However more natural this quantity may seem, as proved in Section \ref{SectionMainResults}, it does not hold that $\mu_{+}(\alpha,\beta,r) = 0$ if and only if $\mu_{-}(\alpha,\beta,r) =0$. This is the reason why the results are not stated in terms of the quantity $\mu(\alpha,\beta,r)$ alone.
\end{remark}

Theorem \ref{TheorOscillatingSequence} also yields the following corollary stating some cases where the sequence \eqref{eqOscillatingSequences} is trivially dense in $\R$.

\begin{corollary}\label{CorOscillatingSequenceSmallValuesOfBeta}
	Let $\lc y_{k} \rc_{k \in \N}$  be the sequence defined in \eqref{eqOscillatingSequences} with the function $F$ satisfying assumption \eqref{eqPeriodicFunction} and $g$ satisfying assumption \eqref{eqGrowthRate}. If there exists a root $r \in \R$ of $F$ such that ${
		\beta \over \gamma(r)} \in (0,1) $, then the sequence \eqref{eqOscillatingSequences} is dense in $\R$.
\end{corollary}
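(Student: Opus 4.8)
The plan is to derive this corollary directly from part (2) of Theorem~\ref{TheorOscillatingSequence}. Fix a root $r$ with $\delta := \beta/\gamma(r) \in (0,1)$. The strategy is to show that for this exponent \emph{both} signed quantities vanish, i.e.\ $\tau_{+}\lc \alpha, \delta, r\rc = 0$ and $\tau_{-}\lc \alpha, \delta, r\rc = 0$. Once this is established, the sign of $c_{r}$ becomes irrelevant: if $c_{r} \;> 0$ then $\tau_{+} = 0$ gives density in $\R^{+}$ and $\tau_{-} = 0$ gives density in $\R^{-}$, whereas if $c_{r} \;< 0$ the two conclusions are merely interchanged. In either case the sequence \eqref{eqOscillatingSequences} is dense in $\R^{+}$ and in $\R^{-}$, and hence in all of $\R$.

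The heart of the argument is an upper bound on the generic term defining $\tau_{+}$, the case of $\tau_{-}$ being identical after replacing even indices by odd ones. First I would use the classical estimate $\ldav q_{2n}\alpha \rdav \;< q_{2n+1}^{-1}$ together with $q_{2n+1} = a_{2n+1}q_{2n} + q_{2n-1} \;> a_{2n+1}q_{2n}$ to obtain
$$ q_{2n}^{\delta}\cdot \lfp q_{2n}\alpha \rfp_{2} = q_{2n}^{\delta}\cdot \ldav q_{2n}\alpha \rdav \;< \frac{q_{2n}^{\delta - 1}}{a_{2n+1}} . $$
Next, since $0 \le e_{2n}(r) \le a_{2n+1}$ and $a_{2n+1} \ge 1$, the minimum in the definition of $\tau_{+}$ is bounded above by $\lc a_{2n+1} - e_{2n}(r)\rc^{(\delta+1)/2} \le a_{2n+1}^{(\delta+1)/2}$, so that the outer maximum equals $a_{2n+1}^{(\delta+1)/2}$. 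Multiplying the two bounds shows that the generic term is at most
$$ a_{2n+1}^{(\delta+1)/2}\cdot \frac{q_{2n}^{\delta-1}}{a_{2n+1}} = a_{2n+1}^{(\delta-1)/2}\cdot q_{2n}^{\delta - 1} . $$

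Because $\delta \;< 1$, the exponent $(\delta-1)/2$ is negative and $a_{2n+1} \ge 1$, so $a_{2n+1}^{(\delta-1)/2} \le 1$; moreover $q_{2n}^{\delta - 1} \to 0$ as $n \to +\oo$ since $q_{2n} \to +\oo$. Hence the generic term tends to $0$, and being nonnegative its $\liminf$ equals $0$, giving $\tau_{+}\lc \alpha, \delta, r\rc = 0$. The same computation with the pair $(2n-1, a_{2n})$ yields $\tau_{-}\lc \alpha, \delta, r\rc = 0$, which completes the reduction and hence the proof. I expect no serious obstacle here: the only mild care needed is the bookkeeping of the $\max\lfp 1, \cdot \rfp$ and $\min\lfp \cdot,\cdot \rfp$ factors, which is handled cleanly by bounding the minimum by its second argument, so that the growth coming from $a_{2n+1}$ is more than compensated by the decay $q_{2n}^{\delta-1}$ forced by $\delta \;< 1$.
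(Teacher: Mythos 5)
Your proof is correct and follows essentially the same route as the paper, which deduces the corollary from Proposition \ref{PropOscillatingSequencesAndFractionalParts} together with Corollary \ref{CorSmallValuesOfBeta}, the latter being established by exactly this computation based on the bound $\ldav q_{n}\alpha \rdav \le 1/(a_{n+1}q_{n})$ and the hypothesis $\delta \;< 1$. The only immaterial differences are that the paper bounds the inner minimum by its first argument $e_{2n}^{\delta}$ instead of by $(a_{2n+1}-e_{2n})^{(\delta+1)/2}$, and that your phrase ``the outer maximum equals $a_{2n+1}^{(\delta+1)/2}$'' should read ``is at most'' (the minimum may be smaller, e.g.\ when $e_{2n}=0$), neither of which affects the upper bound you need.
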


Note that the sufficient condition stated in Theorem \ref{TheorOscillatingSequence} is not necessary. This is proved in Section \ref{SectionRealValues} by explicitly constructing a suitable sequence $\lc e_{n} \rc_{n \ge 0}$ in the Ostrowski expansion \eqref{eqOstrowskiExpansion}.

In addition to Theorem \ref{TheorOscillatingSequence}, we prove the following result which, in the case where $\rho \in \Q$, characterizes the quantities $\mu_{\pm}(\alpha,\beta,\rho)$ in terms of the sequence of denominators of the convergents to the irrational $\alpha$.

\begin{theorem}\label{TheorIrrationalityEvaluationRationalValues}
	Given $\beta \ge 1$, an irrational number $\alpha \in \R\backslash \Q$  and a rational number $ \theta \in \Q$,  where $ \theta = {p\over q}$ for some $p \in \Z,q \in \N$ with $(p,q)=1$, it holds that
	$$ \mu_{+} \lc \alpha,\beta,{p \over q} \rc = 0 \quad \lc \text{resp.} \quad \mu_{-}\lc \alpha,\beta, {p \over q} \rc = 0 \rc $$
	if and only if
	$$ \liminf_{\substack{ n \to +\oo, \\ q|q_{n} }} q_{2n}^{\beta}\cdot \lfp q_{2n} \alpha \rfp_{2} = 0 \quad \lc \text{resp.} \quad    \liminf_{\substack{ n \to +\oo, \\ q|q_{n} }} q_{2n-1}^{\beta}\cdot \lfp q_{2n-1} \alpha \rfp_{2}    \rc  = 0.  $$ 
	
\end{theorem}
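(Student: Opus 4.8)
The plan is to reduce the inhomogeneous approximation of the rational target $p/q$ by the orbit $\lc k\alpha\rc_{k}$ to the homogeneous approximation of $0$ by the auxiliary multiples $N=qk$, and then to read off the relevant denominators from the continued fraction of $\alpha$. Concretely, if $\delta=\lfp k\alpha - p/q\rfp_{2}$ is small, then $N=qk$ satisfies $\lfp N\alpha\rfp_{2}=q\delta$ and the integer nearest to $N\alpha$ is congruent to $p$ modulo $q$; conversely, multiples of $q$ with $N\alpha$ near an integer $\equiv p\pmod q$ produce approximations of $p/q$. Since convergent denominators are the best approximation denominators and $\lfp q_{n}\alpha\rfp_{2}$ has sign $(-1)^{n}$, the positive one-sided approximations governing $\mu_{+}$ will correspond to even indices and the negative ones governing $\mu_{-}$ to odd indices, and the divisibility $q\mid q_{2n}$ is exactly what lets a convergent be pushed onto the residue class $p\pmod q$ cheaply. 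I would treat the $\mu_{+}$ statement in detail, the $\mu_{-}$ one being identical after swapping parities and signs.

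For the sufficiency direction, assume there are indices $n_{j}$ with $q\mid q_{2n_{j}}$ and $q_{2n_{j}}^{\beta}\lfp q_{2n_{j}}\alpha\rfp_{2}\to 0$. Since $\lc p_{2n_{j}},q_{2n_{j}}\rc=1$ and $q\mid q_{2n_{j}}$, the integer $p_{2n_{j}}$ is invertible modulo $q$, so I can pick $j_{0}\in\lfp 1,\dots,q\rfp$ with $j_{0}p_{2n_{j}}\equiv p\pmod q$ and set $k_{j}=j_{0}q_{2n_{j}}/q\in\N$. A direct computation gives $k_{j}\alpha - p/q = m + j_{0}\lfp q_{2n_{j}}\alpha\rfp_{2}/q$ for some integer $m$, whence $\lfp k_{j}\alpha - p/q\rfp_{2}=j_{0}\lfp q_{2n_{j}}\alpha\rfp_{2}/q>0$ for $j$ large, because $\lfp q_{2n_{j}}\alpha\rfp_{2}>0$. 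Then $k_{j}^{\beta}\lfp k_{j}\alpha - p/q\rfp_{2}=j_{0}^{\beta+1}q^{-\beta-1}q_{2n_{j}}^{\beta}\lfp q_{2n_{j}}\alpha\rfp_{2}\le q_{2n_{j}}^{\beta}\lfp q_{2n_{j}}\alpha\rfp_{2}\to 0$ since $1\le j_{0}\le q$, so $\mu_{+}\lc\alpha,\beta,p/q\rc=0$. The sign of $\lfp q_{2n_{j}}\alpha\rfp_{2}$ forces the even indices here, matching the statement.

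For the necessity direction, take $k_{j}$ with $\delta_{j}=\lfp k_{j}\alpha - p/q\rfp_{2}>0$ and $k_{j}^{\beta}\delta_{j}\to 0$, and put $N_{j}=qk_{j}$. Then $\lfp N_{j}\alpha\rfp_{2}=q\delta_{j}>0$ and $N_{j}^{\beta}\ldav N_{j}\alpha\rdav=q^{\beta+1}k_{j}^{\beta}\delta_{j}\to 0$. Using the best approximation property, if $q_{m}\le N_{j}<q_{m+1}$ then $\ldav q_{m}\alpha\rdav\le\ldav N_{j}\alpha\rdav$, so the convergent $q_{m}=q_{m(j)}$ already satisfies $q_{m}^{\beta}\ldav q_{m}\alpha\rdav\to 0$; moreover the positivity of $\lfp N_{j}\alpha\rfp_{2}$ forces the lowest nonzero Ostrowski digit of $N_{j}$ to sit at an even index, which pins down the parity. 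The remaining and genuinely delicate point is to produce such a convergent that is in addition divisible by $q$.

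To handle it I would view the pair $\lc N_{j},v_{j}\rc$, with $v_{j}$ the nearest integer to $N_{j}\alpha$, as a point of the coset $\lfp u\equiv 0,\ v\equiv p\pmod q\rfp$ lying near the line $v=u\alpha$, and show that any such point of near-minimal height must be an integer multiple of a convergent vector $\lc q_{2n},p_{2n}\rc$ with $q\mid q_{2n}$: reaching the residue class $p\pmod q$ through a convergent with $q\nmid q_{2n}$ costs a multiplicative factor $q/\gcd\lc q_{2n},q\rc>1$, hence a strictly worse value of $q_{2n}^{\beta}\lfp q_{2n}\alpha\rfp_{2}$. Establishing this coset best approximation statement from the Ostrowski expansion, while controlling the bounded factors $j_{0}$ and $\gcd\lc q_{2n},q\rc$ uniformly, is the main obstacle; once it is in place, the extracted even convergents $q_{2n}$ with $q\mid q_{2n}$ satisfy $q_{2n}^{\beta}\lfp q_{2n}\alpha\rfp_{2}\to 0$, giving the stated liminf.
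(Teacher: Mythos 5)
Your forward implication (from $\liminf_{q\mid q_{2n}} q_{2n}^{\beta}\lfp q_{2n}\alpha\rfp_{2}=0$ to $\mu_{+}=0$) is correct and is essentially the paper's argument: write $q_{2n}=q\,q_{2n}'$, pick $j_{0}$ with $j_{0}p_{2n}\equiv p\pmod q$, and observe that $k=j_{0}q_{2n}'$ costs at most a factor $(j_{0}/q)^{1+\beta}\le 1$. The converse, however, is where the content of the theorem lies, and there your proposal has a genuine gap that you yourself flag: you never establish the ``coset best approximation'' claim that a near-minimal solution must come from a convergent with $q\mid q_{2n}$, and the heuristic you offer (that missing the residue class costs a factor $q/\gcd(q_{2n},q)>1$) is not a proof --- a bounded multiplicative loss is exactly what a liminf-equals-zero statement tolerates, so you cannot rule out the bad convergents by a ``strictly worse value'' argument alone. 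The paper closes this by a quantitative contradiction: with $q_{n}\le qk<q_{n+1}$ and $\alpha=p_{n}/q_{n}+(-1)^{n}\e'/q_{n}^{1+\beta}$, if $q\nmid q_{n}$ then $\ldav kp_{n}/q_{n}-p/q\rdav\ge 1/(qq_{n})$ forces $k\e'/q_{n}^{1+\beta}\ge 1/(2qq_{n})$, while $\beta\ge1$ (used here essentially, via $k^{1-\beta}\le1$) and the smallness hypothesis $\e<\e_{0}<1/(2q^{2+\beta})$ give $k\e'/q_{n}^{1+\beta}\le q\e/q_{n}<1/(2q_{n}q^{1+\beta})$, a contradiction; the same mechanism rules out odd $n$.

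A second, independent gap is your parity claim. You assert that positivity of $\lfp N_{j}\alpha\rfp_{2}$ forces the lowest nonzero Ostrowski digit of $N_{j}$ to sit at an even index and that this ``pins down the parity,'' but the convergent $q_{m}$ you extract via the best approximation property satisfies $q_{m}\le N_{j}<q_{m+1}$, i.e.\ $m$ is the index of the \emph{highest} Ostrowski digit of $N_{j}$, not the lowest; the sign of $\lfp N_{j}\alpha\rfp_{2}$ and the parity of that $m$ are not directly linked. Making this rigorous requires characterizing the one-sided best approximations, which is the paper's Lemma \ref{LemIrrationalityEvaluationRationalValues}: the integers $k<q_{n+2}$ with $0<\lfp k\alpha\rfp_{2}\le\lfp q_{n}\alpha\rfp_{2}$ are exactly $k=q_{n}+lq_{n+1}$, $l\in\ldb 0,a_{n+2}-1\rdb$, and for these $k^{\beta}\lfp k\alpha\rfp_{2}\ge q_{n}^{\beta}\lfp q_{n}\alpha\rfp_{2}$ when $\beta\ge1$. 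This lemma is also what settles the case $q=1$, which your sketch does not address. In short: sufficiency is fine, but the necessity direction is an outline of the difficulty rather than a proof of it.
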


Finally, we provide results regarding the density of oscillating sequences \eqref{eqOscillatingSequences} in $\R$ when the parameters $\alpha$ and $\beta$ satisfy $\mu(\alpha,\beta) = +\oo$.  Note that the inequalities $  \mu_{+}(\alpha,\beta) \le \tau_{+}(\alpha,\beta,\rho) $ and $ \mu_{-}(\alpha,\beta) \le \tau_{-}(\alpha,\beta,\rho) $ hold for every choice of $\alpha,\beta$ and $\rho$ (see Lemma \ref{LemIrrationalityEvaluationRationalValues}). The aforementioned assumption therefore implies that, for every real $\rho$, $\tau_{+}(\alpha,\beta, \rho) = \tau_{-}(\alpha,\beta, \rho) = +\oo$. Thus, the sufficient condition in the statement of Theorem \ref{TheorOscillatingSequence} does not hold.
Before stating the result, recall the definition of inhomogeneous Bohr sets (see \cite{ChowBohrSetsAndMultiplicativeDiophantineApproximation,TaoBohrSets} for more details): given  a real number $\rho$, an irrational number $\alpha$, a natural number $N$ and a positive number $\e \;>0$, let
\begin{equation}\label{eqBohrSets}
\mc{N}_{\rho}\lc N, \alpha, \e \rc = \lfp k \in \N: \text{ } k \le N, \text{ } \ldav k\alpha - \rho \rdav \le \e \rfp .
\end{equation}
We use Bohr sets in order to capture the terms of the sequence \eqref{eqFractionalParts} which affect its density properties in $\R$. Given the Ostrowski expansion \eqref{eqOstrowskiExpansion} of $\rho$, define a sequence of natural numbers by setting
\begin{equation}\label{eqOstrowskiSequence}
\kk_{n} = \sum_{j=0}^{n} e_{j}(\rho)\cdot q_{j} \quad \text{for all } n \ge 0 .
\end{equation} 

\begin{theorem}\label{TheorBohrSets}
	Let $\alpha$ be an irrational number and let $\beta \;>0$ be such that $\mu(\alpha,\beta) = +\oo$.
	Denote by $\lc w_{k} \rc_{k \in \N}$ the sequence defined in \eqref{eqFractionalParts}.
	Let $\rho$ be a real number and let $\lc e_{j}(\rho) \rc_{j \ge 0}$ be the digits in its Ostrowski expansion. Also, let $\lc \kk_{n} \rc_{n\ge 0}$ be the sequence defined in \eqref{eqOstrowskiSequence}. Then, the sequence $\lc w_{k} \rc_{k\in \N}$ is dense in $\R$ if and only if the subsequence 
	$ \lc w_{k} \rc_{k \in \mathfrak{D}}$ is dense in $\R$, where 
	\begin{equation}\label{eqTheorBSset}
	\mathfrak{D} = \bigcup_{n=1}^{+\oo} \lc \mc{N}_{\rho}\lc n \rc \cup \mc{N'}_{\rho} \lc n \rc  \rc
	\end{equation}
	with $\mc{N}_{\rho}(n)=  \mc{N}_{\rho}\lc \kk_{n}, \alpha, \ldav q_{n} \alpha \rdav \rc$ and $\mc{N'}_{\rho}(n) =\mc{N}_{\rho} \lc \kk_{n-1}+ q_{n}, \alpha, { \ldav q_{n-1}\alpha \rdav \over 1 + e_{n-1}^{\beta}} \rc$. 
	Moreover, the inclusions
	\begin{equation}\label{eqTheorBSInclusions}
	\lfp \kk_{n} \rfp_{n \in \N} \sub \mathfrak{D}  \quad \text{and}\quad \mathfrak{D}\sub \bigcup_{n=0}^{+\oo} \lc \mc{M}_{\rho}(n) \cup \mc{M'}_{\rho}(n) \rc 
	\end{equation}
	hold, where
	$$\mc{M}_{\rho}(n) =  \bigcup_{l =0}^{2} \lfp \kk_{n} + \lc e_{n}-l \rc \cdot q_{n+1} \rfp $$
	and 
	$$\mc{M'}_{\rho}(n) =  \bigcup_{l=0}^{1} \lfp \kk_{n} + (l+1)q_{n}, \kk_{n} + q_{n+1} - l q_{n} \rfp .$$
	
\end{theorem}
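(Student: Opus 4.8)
The two halves of the statement---the density equivalence and the inclusions---are linked, and the plan is to extract both from a single analysis of when the weighted distance $k^\beta\ldav k\alpha-\rho\rdav$ stays bounded. Writing $w_k=k^\beta\lfp k\alpha-\rho\rfp_2$, so that $|w_k|=k^\beta\ldav k\alpha-\rho\rdav$, the whole theorem rests on the following \textbf{Main Claim}: for every $C>0$ the set $\lfp k\in\N\setminus\mathfrak{D}:k^\beta\ldav k\alpha-\rho\rdav\le C\rfp$ is finite. Granting this, the equivalence is immediate. The implication ``$(w_k)_{k\in\mathfrak{D}}$ dense $\Rightarrow$ $(w_k)_{k\in\N}$ dense'' is trivial, since the former is a subsequence of the latter. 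Conversely, if $(w_k)_{k\in\N}$ is dense, fix a bounded interval $I=(a,b)$ and set $C=1+\max(|a|,|b|)$; by the Main Claim all but finitely many indices $k$ with $w_k\in I$ lie in $\mathfrak{D}$, so choosing a subinterval $I'\sub I$ avoiding those finitely many exceptional values and using the density of $(w_k)_{k\in\N}$ produces some $k\in\mathfrak{D}$ with $w_k\in I'\sub I$. Thus $(w_k)_{k\in\mathfrak{D}}$ meets every interval and is dense.

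For the Main Claim I would use the Ostrowski representation of the index, $k=\sum_{j\ge0}b_jq_j$ with the admissible digit ranges, and compare it with the expansion \eqref{eqOstrowskiExpansion} of $\rho$. Reducing modulo $1$ and using $\lfp q_j\alpha\rfp_2=(-1)^j\ldav q_j\alpha\rdav$ yields $k\alpha-\rho\equiv\sum_{j\ge0}(b_j-e_j)\lfp q_j\alpha\rfp_2\pmod1$, so that $\ldav k\alpha-\rho\rdav$ is governed by the \emph{lowest} level $m$ at which $b_m\neq e_m$, the higher levels being summable through the telescoping identity $\ldav q_{j-1}\alpha\rdav=a_{j+1}\ldav q_j\alpha\rdav+\ldav q_{j+1}\alpha\rdav$. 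Away from the cancellation regime discussed below this gives $\ldav k\alpha-\rho\rdav\gg\ldav q_m\alpha\rdav$. Since the digits of $k$ agree with those of $\rho$ below level $m$, either $b_m\neq0$ or some higher digit of $k$ is nonzero, and in both cases $k\ge q_m$; the only alternative, all digits of $k$ above level $m-1$ vanishing, forces $k=\kk_{m-1}$, which lies in $\mathfrak{D}$ by the lower inclusion. Finally $k^\beta\ldav k\alpha-\rho\rdav\le C$ forces $\ldav q_m\alpha\rdav\to0$, hence $m\to+\oo$, so the hypothesis $\mu(\alpha,\beta)=+\oo$---equivalent to $q_n^\beta\ldav q_n\alpha\rdav\to+\oo$---gives $k^\beta\ldav k\alpha-\rho\rdav\gg q_m^\beta\ldav q_m\alpha\rdav\to+\oo$, contradicting boundedness for $k$ large.

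The lower inclusion $\lfp\kk_n\rfp_{n\in\N}\sub\mathfrak{D}$ follows by truncating \eqref{eqOstrowskiExpansion}: from $\kk_n\alpha-\rho\equiv-\sum_{j>n}e_j\lfp q_j\alpha\rfp_2\pmod1$ the same telescoping estimate gives $\ldav\kk_n\alpha-\rho\rdav\le\ldav q_n\alpha\rdav$, whence $\kk_n\in\mc{N}_{\rho}(n)$. For the upper inclusion I would classify, via the three-distance structure of the points $\lfp k\alpha\rfp$, those $k\le\kk_n$ (resp. $k\le\kk_{n-1}+q_n$) whose distance to $\rho$ falls below the prescribed radius: the returns of $k\alpha$ to an $\ldav q_n\alpha\rdav$-neighbourhood of $\rho$ are spaced by best-approximation denominators, which pins every such $k$ to one of the finitely many values $\kk_n+(e_n-l)q_n$ or $\kk_n+(l+1)q_n,\ \kk_n+q_{n+1}-lq_n$ gathered in $\mc{M}_{\rho}(n)$ and $\mc{M'}_{\rho}(n)$.

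The main obstacle is the cancellation regime just flagged: when $|b_m-e_m|=1$ and the higher Ostrowski digits of $k$ are nearly maximal, the tail $\sum_{j>m}(b_j-e_j)\lfp q_j\alpha\rfp_2$ can almost cancel the leading term $\pm\ldav q_m\alpha\rdav$, driving $\ldav k\alpha-\rho\rdav$ well below $\ldav q_m\alpha\rdav$ so that the crude lower bound fails. These are precisely the indices captured by the second family $\mc{N'}_{\rho}(n)$, and controlling them is what forces the shrunken radius $\ldav q_{n-1}\alpha\rdav/(1+e_{n-1}^\beta)$: the weight $e_{n-1}^\beta$ must be calibrated so that $\mc{N'}_{\rho}(n)$ keeps every index with bounded $|w_k|$ yet stays thin enough for the upper inclusion to hold. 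Showing that this single calibration is simultaneously large enough that no bounded-$|w_k|$ index escapes $\mathfrak{D}$ (so the Main Claim holds) and small enough that $\mathfrak{D}\sub\bigcup_{n}(\mc{M}_{\rho}(n)\cup\mc{M'}_{\rho}(n))$ is the heart of the argument.
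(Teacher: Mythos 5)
Your high-level reduction is sound: showing that $|w_k|\to+\oo$ along $k\notin\mathfrak{D}$ (your Main Claim) does yield the density equivalence, and your argument for the lower inclusion $\lfp\kk_n\rfp_{n\in\N}\sub\mathfrak{D}$ via \eqref{eqOstrowskiResidue} is exactly the paper's. But there is a genuine gap, and also a structural misreading of where the difficulty lies. You route the Main Claim through an Ostrowski digit comparison between $k$ and $\rho$, which forces you into the ``cancellation regime,'' and you then explicitly leave its resolution --- calibrating the radius $\ldav q_{n-1}\alpha\rdav/(1+e_{n-1}^{\beta})$ so that $\mathfrak{D}$ is simultaneously large enough for the Main Claim and small enough for the upper inclusion --- as ``the heart of the argument'' without carrying it out. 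That is precisely the content of the theorem, so the proof is not complete. The same applies to the upper inclusion $\mathfrak{D}\sub\bigcup_n(\mc{M}_{\rho}(n)\cup\mc{M}'_{\rho}(n))$, for which you only gesture at a three-distance classification; the paper establishes it through an explicit chain of set inclusions (reducing $\mc{N}_{\rho}(n+1)$ and $\mc{N}'_{\rho}(n+1)$ intersected with $\ldb\kk_n,\kk_{n+1}\rdb$ to homogeneous Bohr sets $\mc{N}_{0}(e_{n+1}q_{n+1},\alpha,2\ldav q_{n+1}\alpha\rdav)$ and $\mc{N}_{0}(q_{n+1},\alpha,2\ldav q_n\alpha\rdav)$ via the tail bound \eqref{eqOstrowskiResidue}, and then enumerating the members of those homogeneous sets).

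The structural point is that no cancellation analysis is needed for the density equivalence at all: since $\mathfrak{D}$ is \emph{defined} as a union of Bohr sets, $k\notin\mathfrak{D}$ with $k\in\ldb\kk_n,\kk_{n+1}\rdb$ means by definition that $\ldav k\alpha-\rho\rdav$ exceeds the relevant radius, namely $\ldav q_n\alpha\rdav/(1+e_n^{\beta})$ when $k\le\kk_n+q_{n+1}$ (where $k\ge\kk_n\ge e_nq_n$) and $\ldav q_{n+1}\alpha\rdav$ when $k\ge\kk_n+q_{n+1}\ge q_{n+1}$; in either case $|w_k|\gg q_m^{\beta}\ldav q_m\alpha\rdav$ for the appropriate $m\in\lfp n,n+1\rfp$, and $\mu(\alpha,\beta)=+\oo$ finishes it. The indices where your tail sum nearly cancels the leading term are exactly those lying in $\mc{N}'_{\rho}(n+1)$, i.e.\ inside $\mathfrak{D}$, so they never need to be bounded from below. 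The hard, unproven part of your plan is therefore only the upper inclusion, and that is where the work must go.
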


Throughout this paper we use  Vinogradov's asymptotic notation: if, given two real functions $w,v: \R^{+} \to \R^{+}$, there exists a positive constant $C \;>0$ such that for every $x \in \R^{+}$  it holds that $ w(x) \le C\cdot v(x)$, then we write  $ w(x) \ll v(x)$. 
Equivalently, one may use Landau's Big-$O$ notation and write $ w(x) = O\lc v(x) \rc $. The constant $C$ is referred to as the implicit constant. If the implicit constant depends on  some parameter,  say $t$, then we index the notation as $ w(x) \ll_{t} v(x)$ (equivalently, as $w(x) = O_{t}\lc v(x) \rc$. If for two functions $w,v$ it holds that $ w(x) \ll v(x) $ and $ v(x) \ll w(x)$  for all admissible values of $x$, then we write $w(x) \asymp v(x)$. 
Two real sequences $\lc a_{n} \rc_{n\in \N}, \lc b_{n} \rc_{n\in\N}$ are called \emph{asymptotically equal} if  $ a_{n}/ b_{n} \underset{n \to +\oo}{\longrightarrow} 1$.

The paper is organized as follows.  In Section \ref{SectionAuxiliaryResults}, we first reduce the study of \eqref{eqOscillatingSequences} to that of the auxiliary sequences \eqref{eqFractionalParts}. 
In Section \ref{SectionRationalValues}, we study the case where $\rho$ is rational and establish in this case the first statement in Theorem \ref{TheorOscillatingSequence}.
In Section \ref{SectionRealValues}, we use the Ostrowski expansion \eqref{eqOstrowskiExpansion} to prove sufficient conditions for \eqref{eqFractionalParts} to be dense in  $\R$ when the root $r$ is irrational. Moreover, given parameters $\alpha$ and $\beta$ and a prescribed positive quantity $\gamma$, we provide an effective construction of the sequence $\lc e_{n} \rc_{n \ge 0}$ in the expansion \eqref{eqOstrowskiExpansion} ensuring that oscillating sequences of the form \eqref{eqOscillatingSequences} are dense in  $\R$ and satisfy $\gamma(r) = \gamma$, for some root $r$ of $F$.
In Section \ref{SectionMainResults} we use the results  from the previous sections to complete the proof of Theorem \ref{TheorOscillatingSequence} and to prove Theorem \ref{TheorIrrationalityEvaluationRationalValues}.	
In Section \ref{SectionBohrSets} we prove Theorem \ref{TheorBohrSets}.


\paragraph{Acknowledgement.}
	The author would like to thank Faustin Adiceam for his time spent reviewing this paper, as well as for his useful comments and suggestions towards the final presentation.

\section{Some Auxiliary Results}\label{SectionAuxiliaryResults}

The goal of this section is to reduce the study of the density of sequence \eqref{eqOscillatingSequences} to that of the sequence \eqref{eqFractionalParts}. 

\begin{proposition}\label{PropOscillatingSequencesAndFractionalParts}
	Let $F: \R \to \R$ be a $1$-periodic function satisfying assumption  \eqref{eqPeriodicFunction}. Let also $g: \R^{+} \to \R^{+}$ satisfy assumption \eqref{eqGrowthRate} and let $\lc a_{k} \rc_{k \in \N}$ be a sequence of real numbers. Then, a real number $h \in \R$ is a limit point of the sequence $ \lc g(k)\cdot F\lc a_{k} \rc \rc_{k \in \N}$ if and only if there exists a root $r$ of $F$ such that $h$ lies in the closure of the set 
	$$ \lfp \e\lc \lfp a_{k} -r \rfp_{2} \rc \cdot c_{r} \cdot k^{\beta} \cdot \ldav a_{k} -r \rdav^{\gamma(r)} \rfp_{k \in \N} .$$	
\end{proposition}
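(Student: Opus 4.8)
The plan is to reduce both implications to a single asymptotic identity and then read it off in each direction. First I would note that, since $\beta \;> 0$, assumption \eqref{eqGrowthRate} forces $g(k) \to +\oo$; consequently a finite number $h$ can be a limit point of $\lc g(k) F(a_k) \rc_{k\in\N}$ only along a subsequence on which $F(a_k) \to 0$. Since $F$ is continuous, $1$-periodic and has only isolated roots, its zero set is a closed, discrete subset of the compact interval $[0,1]$ and hence finite. The heart of the argument is the claim that, for a fixed root $r$, along any subsequence $\lc k_j \rc_{j \in \N}$ with $\ldav a_{k_j} - r \rdav \to 0$ one has
$$ g(k_j)\cdot F\lc a_{k_j} \rc = \e\lc \lfp a_{k_j} - r \rfp_{2} \rc \cdot c_r \cdot k_j^{\beta}\cdot \ldav a_{k_j} - r \rdav^{\gamma(r)} \cdot \lc 1 + o(1) \rc . $$
This follows by writing $a_{k_j} \equiv r + x_{k_j}$ modulo $1$ with $x_{k_j} = \lfp a_{k_j} - r \rfp_{2} \to 0$, applying \eqref{eqPeriodicFunction} (whose additive error may be absorbed into a multiplicative $\lc 1 + o(1) \rc$ factor because $\lav c_r \rav \neq 0$), and using $g(k_j) = k_j^{\beta}\lc 1 + o(1) \rc$. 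Terms with $a_{k_j} \equiv r$ modulo $1$ make both sides vanish and may be ignored. Since the factor $1 + o(1)$ is eventually bounded away from $0$, the left-hand side converges to a finite limit if and only if the quantity $A_{k_j} := \e\lc \lfp a_{k_j} - r \rfp_{2} \rc c_r k_j^{\beta} \ldav a_{k_j} - r \rdav^{\gamma(r)}$ does, and then to the same limit.

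For the direct implication I would take a subsequence realising the limit point $h$, so $F\lc a_{k_j} \rc \to 0$, and extract a further subsequence along which the fractional parts $\lfp a_{k_j} \rfp$ converge to some $r^* \in [0,1]$. Continuity and periodicity give $F(r^*) = \lim_j F\lc a_{k_j} \rc = 0$, so $r^*$ represents a root $r$ of $F$, and $\ldav a_{k_j} - r \rdav \to 0$. The identity above then gives $A_{k_j} \to h$, placing $h$ in the closure of the set attached to $r$.

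For the converse I would fix the given root $r$ and a subsequence along which $A_{k_j} \to h$. Here finiteness of $h$ itself forces proximity to $r$: as $\lav c_r \rav k_j^{\beta} \ldav a_{k_j} - r \rdav^{\gamma(r)} = \lav A_{k_j} \rav$ stays bounded while $k_j^{\beta} \to +\oo$, necessarily $\ldav a_{k_j} - r \rdav \to 0$. The same identity then yields $g(k_j) F\lc a_{k_j} \rc \to h$, so $h$ is a limit point of $\lc g(k) F(a_k) \rc_{k\in\N}$.

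The main obstacle is the careful handling of the reduction modulo $1$ near roots lying at the endpoints of $[0,1)$: there proximity to the root can occur from either side, and one must check that distance and sign are read correctly by $\ldav \cdot \rdav$ and $\lfp \cdot \rfp_{2}$, which is precisely why these two quantities (rather than $\lfp \cdot \rfp$) appear. Beyond this, the two implications are symmetric; the only genuine asymmetry is that the direct implication must produce the relevant root by a compactness argument, whereas in the converse the root is given and it is the finiteness of $h$ that enforces $\ldav a_{k_j} - r \rdav \to 0$.
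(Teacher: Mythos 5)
Your argument is correct and follows essentially the same route as the paper: the forward direction extracts a root by compactness and continuity (since $g(k)\to+\oo$ forces $F(a_{k})\to 0$ along the subsequence), and both directions then rest on the asymptotic identity $g(k)F(a_{k})=\e\lc\lfp a_{k}-r\rfp_{2}\rc c_{r}k^{\beta}\ldav a_{k}-r\rdav^{\gamma(r)}(1+o(1))$, which the paper packages as Lemma \ref{LemRemovingErrorTerms} and you derive inline by absorbing the additive errors of \eqref{eqGrowthRate} and \eqref{eqPeriodicFunction} into a multiplicative factor. Your observation that in the converse the boundedness of $A_{k_j}$ together with $k_j^{\beta}\to+\oo$ forces $\ldav a_{k_j}-r\rdav\to 0$ is a point the paper leaves implicit, but it is the same proof.
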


To prove Proposition \ref{PropOscillatingSequencesAndFractionalParts}, we need the following lemma which  allows us to remove the error terms from the definitions of the growth rate function in \eqref{eqGrowthRate} and the periodic function in \eqref{eqPeriodicFunction}. Its proof, which is elementary, is left to the reader.

\begin{lemma}[Removing the Error Terms from Periodic Functions and Growth Rates]\label{LemRemovingErrorTerms}
	Let $\bm{f} = \lc f_{k} \rc_{k\in \N} $ be a sequence in $\R$ such that  $f_{k} \underset{k\to +\oo}{\longrightarrow} 0$. Let $g: \R^{+} \to \R^{+}$ be an increasing function such that $g(t)\underset{t\to +\oo}{\longrightarrow} +\oo$. Let also $u,v$ be real  functions such that 
	\begin{align*}
	\lim_{t\to +\oo} u(t) = 0   \quad \text{and} \quad  \lim_{x\to 0} v(x) =0 .
	\end{align*}
	Then, the sequences 
	$$ \lc g(k) \cdot f_{k} \rc_{k \in \N}, \quad \lc\lc g(k)+  u(k)\cdot g(k)\rc\cdot f_{k} \rc_{k\in \N}  $$
	and 
	$$ \lc g(k)\cdot \lc f_{k} + v\lc f_{k} \rc\cdot f_{k} \rc \rc_{k\in\N} $$
	are pairwise asymptotically equal and have therefore the same limit points.
	
\end{lemma}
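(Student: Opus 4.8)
The plan is to exploit the multiplicative structure relating the three sequences rather than any additive estimate. Write $A_{k} = g(k)\cdot f_{k}$ for the first sequence, and observe that the second and third sequences factor as $B_{k} = \lc 1 + u(k) \rc \cdot A_{k}$ and $C_{k} = \lc 1 + v\lc f_{k} \rc \rc\cdot A_{k}$. Thus each of $B_{k}$ and $C_{k}$ is obtained from $A_{k}$ by multiplication by a scalar factor, namely $\lambda_{k} = 1 + u(k)$ and $\nu_{k} = 1 + v\lc f_{k} \rc$ respectively, and these factorizations hold identically for every $k \in \N$.

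First I would check that these factors tend to $1$. The convergence $\lambda_{k} \to 1$ is immediate from the hypothesis $u(t) \to 0$ as $t \to +\oo$. For $\nu_{k}$, since $f_{k} \to 0$ and $\lim_{x\to 0} v(x) = 0$, the composition satisfies $v\lc f_{k} \rc \to 0$, whence $\nu_{k} \to 1$. On any index where $f_{k} \neq 0$ (so that $A_{k} \neq 0$, using that $g(k) > 0$ for large $k$) the ratios $B_{k}/A_{k} = \lambda_{k}$ and $C_{k}/A_{k} = \nu_{k}$ are well defined and tend to $1$; this is the asserted pairwise asymptotic equality in the sense of the footnote, with the remaining pair handled by $B_{k}/C_{k} = \lambda_{k}/\nu_{k} \to 1$.

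Next I would establish the general principle that a sequence and a scalar rescaling of it by factors tending to $1$ share the same finite limit points. Concretely, if $h \in \R$ is a limit point of $\lc A_{k} \rc$, choose a subsequence with $A_{k_{j}} \to h$; then $B_{k_{j}} = \lambda_{k_{j}} A_{k_{j}} \to 1\cdot h = h$, so $h$ is a limit point of $\lc B_{k} \rc$. Since $\lambda_{k} \neq 0$ for large $k$, the inverse relation $A_{k} = \lambda_{k}^{-1} B_{k}$ with $\lambda_{k}^{-1} \to 1$ gives the reverse inclusion, and the identical argument applied with $\nu_{k}$ handles $\lc C_{k} \rc$. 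This yields that all three sequences have the same set of limit points.

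The only point requiring care is the degenerate case in which $f_{k} = 0$ for infinitely many $k$, where the literal ratio becomes $0/0$. I expect this bookkeeping to be the sole subtlety. It is harmless: whenever $f_{k} = 0$ all three sequences take the value $0$ simultaneously, so such indices contribute only the common value $0$ to each sequence and do not affect the limit-point correspondence established above, which uses only convergent subsequences together with the factorizations $B_{k} = \lambda_{k} A_{k}$ and $C_{k} = \nu_{k} A_{k}$ valid for every $k$. Everything else is a direct application of the algebra of limits, which is why the proof is elementary.
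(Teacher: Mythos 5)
Your proof is correct. The paper leaves this lemma's proof to the reader as elementary, and your multiplicative factorization $B_{k}=(1+u(k))\cdot A_{k}$, $C_{k}=(1+v(f_{k}))\cdot A_{k}$ with factors tending to $1$, followed by the observation that rescaling by factors tending to $1$ preserves limit points, is precisely the natural intended argument. The only genuine subtlety is the one you flag: at indices with $f_{k}=0$ the ratio in the footnote's definition of asymptotic equality is literally $0/0$, and your resolution (all three sequences vanish simultaneously there, so the limit-point correspondence, which only uses the everywhere-valid factorizations, is unaffected) is adequate.
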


We now deduce Proposition \ref{PropOscillatingSequencesAndFractionalParts}.

\begin{proof}[Proposition \ref{PropOscillatingSequencesAndFractionalParts}:]
	By assumption, the function $F$ is $1$-periodic, continuous in $\R$ and has only isolated roots in $[0,1)$. Therefore, it admits only finitely many roots in the interval $[0,1)$.
	Let $r_{0} \;< r_{1} \;<....\;< r_{m}$ be the finitely many distinct roots of $F$ in $[0,1)$. Fix $h \in \R$, where $h$ is a limit point of the sequence  $ \lc g(k)\cdot F\lc a_{k} \rc \rc_{k \in \N}$. Thus, there exists a sequence of natural numbers $\lc k_{n} \rc_{n\in\N}$  such that  $\underset{n\to+\oo}{\lim} g\lc k_{n}\rc\cdot F\lc a_{k_{n}} \rc = h$. 
	This implies that $ \underset{n\to+\oo}{\lim} F\lc a_{k_n} \rc = 0$ because $g(t) \underset{t \to +\oo}{\longrightarrow} +\oo$. By passing to a subsequence if necessary, the sequence $\lc a_{k_{n}} \rc_{n\in\N}$ converges modulo $1$ to some $r \in [0,1)$ which, by continuity, is a root of $F$. In particular, $\lfp a_{k_{n}} - r \rfp_{2} \underset{n \to +\oo}{\longrightarrow} 0$.

	Set
	$$ u(t) = {t^{\beta} - g(t) \over g(t)}  \quad \text{and} \quad v(x) = { c_{r}\cdot \e(x) \cdot |x|^{\gamma(r)} - F(r+x) \over F(r+x) } \cdotp $$
	Assumptions \eqref{eqGrowthRate} and \eqref{eqPeriodicFunction} imply that $\underset{t\to +\oo}{\lim} u(t) = 0$ and  $\underset{x\to 0}{\lim} v(x) = 0$, respectively.
	Applying Lemma \ref{LemRemovingErrorTerms} to $\bm{f} = \lc f_{k_n} \rc_{n \in \N}= \lc F\lc a_{k_n} \rc \rc_{n\in\N}$, $u$  and $v$ yields that  $h$ lies in the closure of the set 
	$$ \lfp \e\lc \lfp a_{k} -r \rfp_{2} \rc \cdot c_{r} \cdot k^{\beta} \cdot \ldav a_{k} -r \rdav^{\gamma(r)} \rfp_{k \in \N} .$$ 
	The converse follows similarly from Lemma \ref{LemRemovingErrorTerms} and assumption \eqref{eqPeriodicFunction}. The proof is complete.  \hfill $\blacksquare$
	
\end{proof}

\section{Rational Values of the Parameter $\rho$}\label{SectionRationalValues}
In this section, we study the sequence \eqref{eqFractionalParts} in the case where $ \rho \in \Q$.  To this end, we prove the following proposition which relates the quantities $\mu_{\pm}\lc \alpha,\beta,\rho\rc$ with the density in $\R$ of the sequence \eqref{eqFractionalParts}.

\begin{proposition}\label{PropRationalValuesFractionalParts}
	Let $\beta \;>0$ be a positive real number. Given an irrational number $\alpha \in \R\backslash\Q$  and a rational number $\rho$, it holds that the sequence \eqref{eqFractionalParts} is dense in $\R^{+}$ (resp. in $\R^{-}$) if and only if 
	\begin{equation}\label{eqPropRVFPAssumption1}
	\mu_{+}(\alpha,\beta,\rho) = 0 \quad \lc\text{resp.}\quad  \mu_{-}(\alpha,\beta,\rho) = 0 \rc .
	\end{equation}
\end{proposition}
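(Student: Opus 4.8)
The plan is to prove both directions of the equivalence by working directly with the signed fractional parts $\lfp k\alpha - \rho \rfp_{2}$. For the forward direction, I would argue by contraposition: suppose $\mu_{+}(\alpha,\beta,\rho) \;> 0$ (the case $\R^{-}$ being symmetric). By definition of the $\liminf$ in Definition \ref{DefSignedIrrationalityEvaluation}, this means there is a constant $\delta \;> 0$ and an index $K$ such that $k^{\beta}\cdot \lfp k\alpha - \rho \rfp_{2} \ge \delta$ for all $k \ge K$ with $\lfp k\alpha - \rho \rfp_{2} \;> 0$. Hence every positive term $w_{k} = k^{\beta}\lfp k\alpha - \rho\rfp_{2}$ with $k \ge K$ is bounded below by $\delta$, so the sequence cannot accumulate on the interval $(0,\delta)$ and is therefore not dense in $\R^{+}$. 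This establishes that density in $\R^{+}$ forces $\mu_{+}(\alpha,\beta,\rho) = 0$.

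The substantive direction is the converse: assuming $\mu_{+}(\alpha,\beta,\rho) = 0$, I must show $\lc w_{k} \rc$ is dense in $\R^{+}$. Fix a target $t \;> 0$ and $\varepsilon \;> 0$; I need infinitely many (or at least one large) $k$ with $\lav w_{k} - t\rav \;< \varepsilon$. The hypothesis supplies a sequence $k_{j} \to +\oo$ along which $\lfp k_{j}\alpha - \rho\rfp_{2} \;> 0$ and $k_{j}^{\beta}\lfp k_{j}\alpha - \rho\rfp_{2} \to 0$, i.e. the signed fractional parts are positive and decay faster than $k_{j}^{-\beta}$. The idea is to \emph{multiply} such a good index: since $\rho = p/q$ is rational, for any $k$ one has $\lfp (k + q\ell)\alpha - \rho\rfp_{2} \equiv \lfp k\alpha - \rho + q\ell\alpha\rfp_{2}$, and the additive shifts $q\ell\alpha \bmod 1$ can be made small and controlled by choosing $\ell$ via the continued fraction denominators of $\alpha$. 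Concretely, starting from an index $k_{j}$ with a very small positive signed part $s_{j} := \lfp k_{j}\alpha - \rho\rfp_{2}$, I would form indices of the shape $k = m k_{j}$ (or $k = k_{j} + $ multiples of a denominator $q_{n}$) so that $\lfp k\alpha - \rho\rfp_{2} \approx m s_{j}$ stays positive and small while $k^{\beta}$ grows, and tune the multiplier $m \asymp (t/s_{j})^{1/(1+\beta)}$-type scale so that the product $k^{\beta}\cdot(\text{signed part}) = (m k_j)^\beta \cdot (m s_j)$ lands near $t$. The point is that as $s_{j}\to 0$ the granularity of achievable products $(m k_j)^\beta(m s_j)$ near $t$ becomes arbitrarily fine, so every target is approximated.

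The main obstacle, and the step deserving the most care, is guaranteeing that along the constructed indices the signed fractional part stays \textbf{strictly positive} and behaves \emph{linearly} in the multiplier, i.e. that $\lfp k\alpha - \rho\rfp_{2}$ does not wrap around past $\pm \tfrac12$ or change sign before the product reaches the desired magnitude. This is a three-distance / continued-fraction control problem: I would exploit the best-approximation property of the convergents $q_{n}$ to ensure $\lfp q_{n}\alpha\rfp_{2}$ has a definite sign and magnitude $\asymp \ldav q_{n}\alpha\rdav$, and add a bounded number of such shift-steps to $k_{j}$ so that the accumulated displacement stays within the regime where $\lfp \cdot\rfp_{2}$ is additive. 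Rationality of $\rho$ is what makes this clean, because $q\ell\alpha$ lives on the same orbit structure and the relevant shifts reduce to controlling $\ldav q_{n}\alpha\rdav$; this is precisely the mechanism that Theorem \ref{TheorIrrationalityEvaluationRationalValues} later reformulates in terms of the condition $\liminf_{q\mid q_{n}} q_{2n}^{\beta}\lfp q_{2n}\alpha\rfp_{2} = 0$. Once the sign-and-linearity control is in place, matching an arbitrary target $t$ up to $\varepsilon$ is a routine density-of-the-achievable-scales argument.
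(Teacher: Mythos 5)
Your forward direction is exactly the paper's: if $\mu_{+}(\alpha,\beta,\rho)\;>0$ then every positive term of the sequence is eventually bounded below by a positive constant, so the sequence misses a neighbourhood of $0$ in $\R^{+}$. Your converse is also essentially the paper's argument -- take a good index $m$ with $\lfp m\alpha-\rho\rfp_{2}=\e_{n}/m^{\beta}$ small and positive, multiply it, and tune the multiplier so that the product lands near the target, with the achievable values becoming arbitrarily dense as $\e_{n}\to 0$. The one point you leave open, and which is the entire content of the paper's proof, is \emph{which} multipliers are admissible: writing $\lfp\rho\rfp_{2}=p/q$, the paper takes $k=(lq+1)\cdot m$, so that $(lq+1)m\alpha-p/q\equiv lp+(lq+1)\e_{n}/m^{\beta}\pmod 1$ and hence $\lfp k\alpha-\rho\rfp_{2}=(lq+1)\,\e_{n}/m^{\beta}$ \emph{exactly}, for every $l$ with $(lq+1)\e_{n}/m^{\beta}\;<\tfrac12$. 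With this congruence restriction the ``sign-and-linearity control'' you single out as the main obstacle is automatic and requires no three-distance or continued-fraction input whatsoever; the obstruction for a general multiplier $m'$ is not wrap-around but the residue $(m'-1)p\bmod q$, which continued-fraction control of $\ldav q_{n}\alpha\rdav$ does not address. Relatedly, your fallback additive variant $k=k_{j}+q\ell$ would require hitting a prescribed small value with $\lfp q\ell\alpha\rfp_{2}$, an inhomogeneous approximation problem that does not follow from $\mu_{+}(\alpha,\beta,\rho)=0$ alone (this is precisely what forces the Ostrowski machinery of Section \ref{SectionRealValues} in the irrational case). Once the multiplier is fixed as $lq+1$, your granularity argument coincides with the paper's estimate $h=Q_{\beta}(m,l_{h})+O\lc h^{\beta/(1+\beta)}\e_{n}^{1/(1+\beta)}\rc$, so the proof closes.
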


\begin{proof}
	We prove the claim concerning the quantity $\mu_{+}(\alpha,\beta, \rho) $ and the density of the sequence \eqref{eqFractionalParts} in $\R^{+}$, as the claim related to $\mu_{-}(\alpha,\beta,\rho)$ and $\R^{-}$ is established in the same way. 
	
	Assume that $\mu_{+}(\alpha,\beta,\rho) = 0$. From assumption \eqref{eqPropRVFPAssumption1}, we have that, for every $n\in \N$, there exists $ m= m(n) \in \N$ such that 
	$$ 0\le \lfp m\alpha -\rho \rfp_{2} = {\e_{n} \over m^{\beta}} \;< {1\over 2} \quad \text{for some } 0\le \e_{n} \le {1\over n} .$$
	Without loss of generality, assume that 
	$$ \lfp m\alpha \rfp_{2} = \lfp \rho \rfp_{2} + {\e_{n} \over m^{\beta} }  $$
	as otherwise 
	$$ \lfp m\alpha \rfp_{2} = -1 + \lfp \rho \rfp_{2} + {\e_{n} \over m^{\beta} } , $$
	in which case we work similarly. Let us assume that $\lfp \rho \rfp_{2}= {p \over q} $ for some $p \in \Z$ and $q\in \N$ with $(p,q)=1$.  Then, for every $l\in \N_{0}$ such that 
	\begin{equation}\label{eqPropRVFP1}
	\lc lq+1 \rc\cdot {\e_{n} \over n^{\beta}} \;< {1\over 2}  ,
	\end{equation}
	it holds that $	 \lfp \lc lq+1\rc\cdot m\alpha - {p\over q} \rfp_{2} = \lc lq+1\rc\cdot {\e_{n} \over m^{\beta}} .$ 	 
	For those $l\in \N$  which satisfy inequality \eqref{eqPropRVFP1}, set
	\begin{equation}\label{eqPropRVFP2}
	Q_{\beta}(m,l) = (lq+1)^{\beta}\cdot m^{\beta}\cdot \lfp (lq+1)\cdot m\alpha - {p\over q} \rfp_{2} = \lc lq+1\rc^{1+\beta} \cdot \e_{n} ,
	\end{equation}
	where we recall that $\e_{n}$ depends  on the choice $m$. Fix $h\;>0$.
	Notice that,  for $n$ large enough, that is for $\e_{n}\le {1\over n}$ sufficiently small and $m=m(n)$ sufficiently large, the natural number 
	$$ l_{h} = \left\lceil { h^{1 \over 1+\beta} \cdot \e_{n}^{- {1 \over 1+\beta}} -1 \over q } \right\rceil $$
	satisfies inequality \eqref{eqPropRVFP1}. The quantity $Q_{\beta}(m,l_{h})$ is therefore a term in the sequence \eqref{eqFractionalParts}.
	
	The density of sequence \eqref{eqFractionalParts} follows upon noticing that 
	$$ h =  \lc \lc {h^{1\over 1+\beta}\cdot \e_{n}^{-{1 \over 1+\beta}} -1 \over q} \rc \cdot q + 1 \rc^{1+\beta} \cdot \e_{n} = Q_{\beta}\lc m(n),l_{h} \rc + O\lc h^{\beta} \cdot \e_{n}^{1\over 1+\beta} \rc $$
	and upon letting $n \to +\oo$.
	
	In the other direction, assume that  $\mu_{+}\lc \alpha,\beta, \rho\rc \;>0$. From Definition \ref{DefSignedIrrationalityEvaluation} of the quantity $\mu_{+}\lc \alpha,\beta,\rho\rc$, we have that for every $k\gg_{\alpha} 1$ such that $\lfp k\alpha - \rho \rfp_{2} \;>0 $, it holds that
	$$ k^{\beta}\cdot \lfp k\alpha - \rho \rfp_{2}  \ge C  $$
	for some positive constant $C$. Therefore, sequence \eqref{eqFractionalParts} cannot be dense in $\R^{+}$.
	
	The proof is complete. \hfill $\blacksquare$
\end{proof}

An immediate consequence of Propositon \ref{PropRationalValuesFractionalParts} is the following corollary which deals with the case when the exponent $\beta$ takes values in $(0,1)$.

\begin{corollary}
	Given $\beta\in (0,1)$, $\alpha$ irrational and $\rho$ a rational number, the sequence 
	$$ \lc k^{\beta}\cdot \lfp k\alpha - \rho\rfp_{2} \rc_{k \in \N} $$
	is dense in $\R$. Equivalently, 
	$$ \mu_{+}(\alpha,\beta,\rho) = \mu_{-}(\alpha,\beta,\rho)  = 0 .$$
\end{corollary}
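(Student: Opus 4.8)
The plan is to deduce the statement from Proposition \ref{PropRationalValuesFractionalParts}. By that proposition, density of \eqref{eqFractionalParts} in $\R^{+}$ is equivalent to $\mu_{+}(\alpha,\beta,\rho)=0$ and density in $\R^{-}$ is equivalent to $\mu_{-}(\alpha,\beta,\rho)=0$; since density in $\R$ means density in both $\R^{+}$ and $\R^{-}$, the two assertions of the corollary (``dense in $\R$'' and ``$\mu_{+}=\mu_{-}=0$'') are literally the same statement read through the proposition. Hence the entire content to be proved is that, for $\beta\in(0,1)$, $\alpha$ irrational and $\rho\in\Q$, one has $\mu_{+}(\alpha,\beta,\rho)=\mu_{-}(\alpha,\beta,\rho)=0$. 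I would establish this directly from the definition in Definition \ref{DefSignedIrrationalityEvaluation} by exhibiting, for each of the two signs, a sequence $k\to+\oo$ along which $k^{\beta}\cdot\lav\lfp k\alpha-\rho\rfp_{2}\rav\to 0$.

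The main tool is the three-distance theorem. Write $\delta_{n}=\ldav q_{n}\alpha\rdav$ and recall $\delta_{n}\le\delta_{n-1}\le 1/q_{n}$. For fixed $n$, the points $\lfp \lfp j\alpha\rfp : 0\le j\le q_{n}-1\rfp$ partition the circle into arcs whose lengths take only the two values $\delta_{n-1}$ and $\delta_{n-1}+\delta_{n}$; in particular every arc has length at most $\delta_{n-1}+\delta_{n}\le 2/q_{n}$. I would locate $\rho$ inside the arc that contains it and let $k_{-}\alpha$ and $k_{+}\alpha$ (with $0\le k_{\pm}\le q_{n}-1$) be the two endpoints, so that $\lfp k_{+}\alpha-\rho\rfp_{2}\in(0,\,\delta_{n-1}+\delta_{n}]$ while $\lfp k_{-}\alpha-\rho\rfp_{2}\in[-(\delta_{n-1}+\delta_{n}),\,0)$, the arc length being well below $1/2$ for $n$ large. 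Thus a single scale $n$ produces one term of each sign, and the uniform bound
\begin{equation*}
k_{\pm}^{\beta}\cdot\lav\lfp k_{\pm}\alpha-\rho\rfp_{2}\rav\;\le\;q_{n}^{\beta}\cdot\frac{2}{q_{n}}\;=\;2\,q_{n}^{\beta-1}\xrightarrow[n\to+\oo]{}0
\end{equation*}
holds because $\beta<1$. This is exactly where the hypothesis $\beta\in(0,1)$ is used, and the elegant point is that the two endpoints of a single small gap deliver both signs simultaneously with the same decaying rate.

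It remains to check that $k_{\pm}=k_{\pm}(n)$ are unbounded as $n\to+\oo$, so that these terms are genuinely seen by the $\liminf$ over $k\to+\oo$ in $\mu_{\pm}$. If some value $k^{*}$ occurred for infinitely many $n$ on the positive side, then $\lfp k^{*}\alpha-\rho\rfp_{2}\in(0,2/q_{n}]\to 0$ would force $k^{*}\alpha-\rho\in\Z$, impossible since $\alpha$ is irrational and $\rho$ is rational (the case $k^{*}=0$ contradicts positivity once $\rho\in\Z$, and otherwise contradicts irrationality of $\alpha$); the same argument applies on the negative side. Therefore $\mu_{+}(\alpha,\beta,\rho)=\mu_{-}(\alpha,\beta,\rho)=0$, and Proposition \ref{PropRationalValuesFractionalParts} yields density in $\R$. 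I expect the only delicate points to be the precise bookkeeping of the three-distance gap lengths and the verification that the constructed indices tend to infinity; both are routine, the conceptual difficulty of obtaining both signs with a single good rate being resolved for free by taking the two endpoints of each gap.
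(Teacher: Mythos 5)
Your proposal is correct and follows essentially the same route as the paper: the paper also reduces to showing $\mu_{\pm}(\alpha,\beta,\rho)=0$ via Proposition \ref{PropRationalValuesFractionalParts}, and obtains this from the fact that $\lc k\alpha \rc_{k=1}^{q_{n}}$ is ${2\over q_{n}}$-dense in $\T$ (your three-distance argument is just a sharper form of this), giving $\mu_{\pm}(\alpha,1,\rho)\le 2$ and hence $\mu_{\pm}(\alpha,\beta,\rho)=0$ since $q_{n}^{\beta-1}\to 0$ for $\beta<1$. Your extra care about extracting both signs from the two endpoints of a gap and about the unboundedness of the indices is sound but not a different method.
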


\begin{proof} Let $\beta,\alpha,\rho$ as in the statement of the corollary. By the theory  of continued fractions, if ${p_{n} \over q_{n}}$ is one of the convergents of $\alpha$, then it holds that $ \ldav \alpha - {p_{n} \over q_{n}} \rdav \le {1 \over q_{n}^{2}}.$    
	Thus, we obtain easily that the finite sequence $\lc k\alpha \rc_{k=1}^{q_{n}}$ is ${2\over q_{n}}$-dense in $\T$. This implies that 
	$$\mu_{+}\lc \alpha,1, \rho \rc =  \underset{ \substack{ k\to +\oo,\\ \lfp k\alpha -r\rfp_{2} \;> 0 }}{\liminf} k\cdot \lfp k\alpha - \rho \rfp_{2}  \le 2 ,$$
	which in turn implies that $ \mu_{+}(\alpha,\beta,\rho) 	= \underset{ \substack{ k\to +\oo,\\ \lfp k\alpha -\rho\rfp_{2} \;> 0 }}{ \liminf} k^{\beta}\cdot \lfp k\alpha - \rho \rfp_{2}  = 0 .$
	We work similarly with the quantities $\mu_{-}(\alpha,1,\rho) $ and $\mu_{-}(\alpha,\beta,\rho)$. Proposition \ref{PropRationalValuesFractionalParts} now implies the result.    \hfill $\blacksquare$
	
\end{proof}

\section{Real Values of the Parameter $\rho$}\label{SectionRealValues}

The goal of this section is to use the Ostrowski expansion of a real number $\rho$ in order to
obtain sufficient conditions for the sequence \eqref{eqFractionalParts} to be dense in $\R$. This will lead us to the proof of the second statement in Theorem \ref{TheorOscillatingSequence}.

\subsection{Sufficient Conditions for Density in $\R$}

We now prove that, if $\tau_{+}\lc\alpha,\beta,\rho \rc = \tau_{-}(\alpha,\beta,\rho) =0$, then the sequence \eqref{eqFractionalParts} is dense in $\R$. Moreover, in the case where $\mu_{+}(\alpha,\beta) = \mu_{-}(\alpha,\beta) = 0$, the proof provides an effective way to construct the coefficients in the Ostrowski expansion \eqref{eqOstrowskiExpansion}, and thus the parameter $\rho$, for the sequence \eqref{eqFractionalParts} to enjoy the density property.

\begin{proposition}\label{PropRealValuesSufficientConditions}
	Given $\beta \;>0 $ and $\alpha, \rho \in \R\backslash\Q$, consider the Ostrowski expansion of $\rho$ as defined in \eqref{eqOstrowskiExpansion}.  If 
	$$ \tau_{+}(\alpha,\beta,\rho) = 0 \quad \lc \text{resp.}\quad \tau_{-}(\alpha,\beta,\rho) = 0  \rc, $$
	then the sequence \eqref{eqFractionalParts} is dense in $\R^{+}$ (resp. in $\R^{-}$).
	
\end{proposition}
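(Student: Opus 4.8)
The plan is to localise everything to a single, well-chosen level of the Ostrowski expansion and, at that level, to exhibit an explicit finite family of indices $k$ for which the values $k^{\beta}\lfp k\alpha - \rho\rfp_{2}$ are positive and form an almost arithmetic net whose mesh is controlled by the quantity appearing in $\tau_{+}$. I will only treat the statement for $\tau_{+}$ and density in $\R^{+}$; the case of $\tau_{-}$ and $\R^{-}$ is identical after exchanging even and odd indices and reversing signs, since $\lfp q_{2n}\alpha\rfp_{2}>0$ while $\lfp q_{2n-1}\alpha\rfp_{2}<0$.

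First I would record the basic Ostrowski identity. Writing $\kk_{N-1}=\sum_{j=0}^{N-1}e_{j}(\rho)q_{j}$ as in \eqref{eqOstrowskiSequence} and taking $k=\kk_{N-1}+t\, q_{N}$ for an integer $t$ in the admissible range $\ldb 0,a_{N+1}\rdb$, a direct computation from the expansion \eqref{eqOstrowskiExpansion} gives, modulo $1$,
$$ k\alpha - \rho \equiv (t-e_{N})\lfp q_{N}\alpha\rfp_{2} - \sum_{j\ge N+1} e_{j}(\rho)\lfp q_{j}\alpha\rfp_{2} . $$
The Ostrowski digit constraints bound the tail so that the right-hand side is already the signed fractional part $\lfp k\alpha-\rho\rfp_{2}$ and is dominated by its leading term $(t-e_{N})\lfp q_{N}\alpha\rfp_{2}$. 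This reduces the problem to understanding, level by level, the one-parameter families obtained by moving a single Ostrowski digit of $k$.

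Next I would reduce density to a covering statement and bring in the hypothesis: sequence \eqref{eqFractionalParts} is dense in $\R^{+}$ as soon as, for every $h>0$ and every $\e>0$, infinitely many $k$ satisfy $\lfp k\alpha-\rho\rfp_{2}>0$ and $\lav k^{\beta}\lfp k\alpha-\rho\rfp_{2}-h\rav<\e$. Using $\tau_{+}(\alpha,\beta,\rho)=0$ I fix a subsequence of even levels $2n$ along which $\Phi_{n}:=\max\lfp 1,\min\lfp e_{2n}^{\beta},(a_{2n+1}-e_{2n})^{(\beta+1)/2}\rfp\rfp\cdot q_{2n}^{\beta}\lfp q_{2n}\alpha\rfp_{2}\to 0$, and at each such level I construct a net of positive values of mesh $\ll\Phi_{n}$. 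The construction splits according to which term realises the inner minimum. When $e_{2n}$ is small I take $k=\kk_{2n-1}+(e_{2n}+m)q_{2n}$ with $m=1,\dots,a_{2n+1}-e_{2n}$; then $\lfp k\alpha-\rho\rfp_{2}\approx m\lfp q_{2n}\alpha\rfp_{2}>0$ and $k\approx(e_{2n}+m)q_{2n}$, so the attained values are $\approx(e_{2n}+m)^{\beta}\,m\,q_{2n}^{\beta}\lfp q_{2n}\alpha\rfp_{2}$, a net starting near $e_{2n}^{\beta}q_{2n}^{\beta}\lfp q_{2n}\alpha\rfp_{2}$ and climbing, with relative mesh $O(1/m)\to0$, covering an initial segment of $\R^{+}$ that grows with $n$. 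When instead $a_{2n+1}-e_{2n}$ is small (so, by the constraint $e_{2n}=a_{2n+1}\Rightarrow e_{2n-1}=0$, the neighbouring digit is small), the naive top-digit family is too coarse and I would use a complementary two-level family that trades the top digit at level $2n$ against the adjacent level; the scaling between $k$ and $\lfp q_{2n}\alpha\rfp_{2}$ then produces the exponent $(\beta+1)/2$ and a net of mesh $\ll(a_{2n+1}-e_{2n})^{(\beta+1)/2}q_{2n}^{\beta}\lfp q_{2n}\alpha\rfp_{2}$.

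Finally, since $\Phi_{n}\to0$ along the chosen subsequence, for each fixed $h>0$ and $\e>0$ the target $h$ eventually lies within the range of the level-$2n$ net and within $\e$ of one of its points; as there are infinitely many such levels this yields infinitely many admissible $k$, hence density in $\R^{+}$. The same bookkeeping, run under the stronger hypothesis $\mu_{+}(\alpha,\beta)=\mu_{-}(\alpha,\beta)=0$, lets one prescribe the digits $e_{n}(\rho)$ directly and so gives the asserted effective construction of $\rho$. I expect the main obstacle to be twofold: making the tail estimate and the digit-constraint bookkeeping in the fundamental identity uniform enough that the leading term genuinely controls $\lfp k\alpha-\rho\rfp_{2}$ throughout each family; and, above all, the second regime $e_{2n}\approx a_{2n+1}$, where one must locate the finer family responsible for the exponent $(\beta+1)/2$ and verify that its mesh is the claimed one.
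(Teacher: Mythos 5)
Your overall architecture does match the paper's: both arguments run along a subsequence of even Ostrowski levels realising $\tau_{+}=0$, split into two regimes according to which of $e_{2n}^{\beta}$ and $\lc a_{2n+1}-e_{2n}\rc^{(\beta+1)/2}$ achieves the inner minimum, and in each regime build a net of positive values of $k^{\beta}\lfp k\alpha-\rho\rfp_{2}$ by perturbing a single Ostrowski digit of $k$. Your first regime is essentially the paper's Case 1, but with one real flaw: you confine $t$ to $\ldb 0,a_{2n+1}\rdb$, so your net $\lc e_{2n}+m\rc^{\beta}m\,q_{2n}^{\beta}\lfp q_{2n}\alpha\rfp_{2}$ stops at $m=a_{2n+1}-e_{2n}$. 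Its maximum is then $\ll a_{2n+1}^{1+\beta}q_{2n}^{\beta}\ldav q_{2n}\alpha\rdav\le a_{2n+1}^{\beta}q_{2n}^{\beta-1}$, which for $\beta<1$ and $\alpha$ badly approximable tends to $0$, so the net never reaches a fixed target $h>0$ even though the hypothesis of this regime holds. The restriction is unnecessary: the congruence $k\alpha-\rho\equiv(t-e_{2n})\lfp q_{2n}\alpha\rfp_{2}-\sum_{j>2n}e_{j}\lfp q_{j}\alpha\rfp_{2}\pmod 1$ is valid for every integer $t$, and the only condition needed is that the right-hand side lie in $\lc 0,\tfrac12\rc$; the paper accordingly lets the step parameter run up to $\asymp\lc h/\e_{n}\rc^{1/(\beta+1)}$, far beyond $a_{2n+1}$.

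The genuine gap is the second regime, which you flag but do not resolve. The missing construction is concrete: take $k=\sum_{j=0}^{2n-1}e_{j}q_{j}+l\,q_{2n}-q_{2n-1}$, i.e.\ drop the digit $e_{2n}$ entirely and subtract one copy of $q_{2n-1}$. The recurrence $\lfp q_{2n-1}\alpha\rfp_{2}+a_{2n+1}\lfp q_{2n}\alpha\rfp_{2}=\lfp q_{2n+1}\alpha\rfp_{2}$ converts the offset $-e_{2n}\lfp q_{2n}\alpha\rfp_{2}-\lfp q_{2n-1}\alpha\rfp_{2}$ into $\lc a_{2n+1}-e_{2n}\rc\lfp q_{2n}\alpha\rfp_{2}-\lfp q_{2n+1}\alpha\rfp_{2}$, so that
$$ k^{\beta}\lfp k\alpha-\rho\rfp_{2}\;\approx\;l^{\beta}\lc l+a_{2n+1}-e_{2n}\rc\e_{n},\qquad \e_{n}=q_{2n}^{\beta}\lfp q_{2n}\alpha\rfp_{2}. $$
This net starts at $\approx\lc a_{2n+1}-e_{2n}\rc\e_{n}$, which tends to $0$ under the Case-2 hypothesis because one may assume $\beta\ge1$ there (for $\beta<1$ the first regime always applies), and its mesh at height $h$ is $\ll h^{\beta/(\beta+1)}n^{-1/(\beta+1)}+h^{(\beta-1)/(\beta+1)}n^{-2/(\beta+1)}$; checking this mesh bound is precisely where the exponent $(\beta+1)/2$ enters. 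Without this (or an equivalent) family the proposition is unproved in the regime $e_{2n}\approx a_{2n+1}$, which is exactly the obstacle you identify. The closing remarks about prescribing the digits of $\rho$ under $\mu_{+}(\alpha,\beta)=\mu_{-}(\alpha,\beta)=0$ concern a different proposition of the paper and are not needed here.
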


Before we continue, recall some facts which will be used extensively in the forthcoming proofs. For every $x \in \R$, it holds that $ -\ldav x \rdav \le \lfp x \rfp_{2} \le \ldav x \rdav$. Also, given an irrational $\alpha$ and the Ostrowski expansion \eqref{eqOstrowskiExpansion} of a real number $\rho$, we have that, for every $n \in \N$ 
\begin{equation}\label{eqOstrowskiResidue}
\lav \sum_{j=n+1}^{+\oo} e_{j}(\rho) \cdot \lfp q_{j}\alpha \rfp_{2} \rav \le \ldav q_{n} \alpha \rdav .
\end{equation}
Indeed, by the definition of the continued fraction expansion of a real number $\alpha$ (see \cite[Section 3]{BeresnevichSumsOfReciprocals}), we have that $ a_{1}\cdot \lfp \alpha \rfp -1 = \lfp q_{1} \alpha \rfp_{2} $, $ \lfp \alpha \rfp + a_{2}\cdot \lfp q_{1} \alpha \rfp_{2} = \lfp q_{2} \alpha \rfp_{2} $ and, for every $n \ge 1$, it holds that $\lfp q_{n}\alpha \rfp_{2} + a_{n+2}\cdot \lfp q_{n+1}\alpha \rfp_{2} = \lfp q_{n+2} \alpha \rfp_{2} $. This implies that 
\begin{equation}\label{eqContinuedFractionsIntervals}
\lfp q_{n} \alpha \rfp_{2} = -  \sum_{j\ge 1} a_{n+2i} \cdot \lfp q_{n +2i-1} \cdot \alpha \rfp_{2} \quad\text{for every } n\ge1.
\end{equation}
In turn, this implies that 
$$ \lav \sum_{j=n+1}^{+\oo} e_{j}(\rho)\cdot \lfp q_{j}\alpha \rfp_{2} \rav \le \lav \sum_{i=1}^{+\oo} a_{2i+n}\cdot \lfp q_{2i-1+n}\alpha\rfp_{2} \rav  = \ldav q_{n} \alpha \rdav ,    $$
whence the claim.

\begin{proof}[Proposition \ref{PropRealValuesSufficientConditions}:]
	We prove the result regarding the quantity $\tau_{+}(\alpha,\beta,\rho)$ and the density of \eqref{eqFractionalParts} in $\R^{+}$. The other case follows in the same way. To this end, assume that $\tau_{+}(\alpha,\beta,\rho) =0$. Given $j\ge0$, set $e_{j} = e_{j}(\rho)$.
	
	\paragraph{Case 1:} Assume that 
	\begin{equation}\label{eqPropRVSCAssumption1}
	\liminf_{ \substack{ j \to +\oo }} \max\lfp 1, e_{2j}^{\beta} \rfp \cdot q_{2j}^{\beta} \cdot \lfp q_{2j}\alpha \rfp_{2}  = 0 .
	\end{equation}

	Fix $n \in \N$. There exists $m=m(n) \in 2\N$  such that
	\begin{equation}\label{eqPropRVSC1}
	\e_{n} := q_{m}^{\beta}\cdot \lfp q_{m} \alpha \rfp_{2} \le {1\over n} \quad\quad \text{and}\quad \quad  e_{m}^{\beta} \cdot \e_{n} \le {1\over n} \cdotp
	\end{equation}
	
	Since $m \in 2\N$, one has that $ \lfp q_{m} \alpha \rfp_{2} = \ldav q_{m} \alpha \rdav$ and thus  inequality \eqref{eqOstrowskiResidue}  yields that \newline
	$$ \lav \sum_{j=m+1}^{+\oo} e_{j}\cdot \lfp q_{j}\alpha \rfp_{2} \rav \le \lfp q_{m}\alpha \rfp_{2} .$$
	Set 
	\begin{equation}\label{eqPropRVSC2}
	\h = q_{m}^{\beta}\cdot \sum_{j=m+1}^{+\oo} e_{j}\cdot \lfp q_{j}\alpha \rfp_{2} ,
	\end{equation}
	so that $|\h| \le \e_{n}$. Given $l\in \N$ such that  $l\e_{n} -\h \;< {1\over 2 }$, set
	\begin{equation*}
	\begin{split}
	Q_{\beta}\lc m,l \rc &= \lc  \sum_{j=0}^{m-1} e_{j}\cdot q_{j} + e_{m}\cdot q_{m} +l\cdot q_{m} \rc^{\beta}\cdot  \\  &\qquad\qquad \lfp \lc \sum_{j=0}^{m-1} e_{j}\cdot q_{j} 	 + e_{m}\cdot q_{m} + l\cdot q_{m} \rc \cdot \alpha - \rho \rfp_{2} \\
	&\underset{\eqref{eqOstrowskiExpansion},\eqref{eqPropRVSC1},\eqref{eqPropRVSC2} }{=}  \lc { \sum_{j=0}^{m-1} e_{j}\cdot q_{j} \over q_{m} } + e_{m} +l \rc^{\beta}\cdot \lc l\cdot \e_{n} - \h \rc .
	\end{split}
	\end{equation*}
	It easily follows from the Ostrowski expansion of a natural number (see \cite[Lemma 3.1]{BeresnevichSumsOfReciprocals})  that
	$$ { \sum_{j=0}^{m-1} e_{j}\cdot q_{j} \over q_{m} } \le 1 .$$
	In turn, from inequalities \eqref{eqPropRVSC1} one infers that $\lav Q_{\beta}\lc m,0 \rc \rav \ll 1/n$. 
	
	Fix $h\;>0$. Note that for $l' = 2 \cdot \lf h^{1 \over \beta +1}\cdot \e_{n}^{-{1 \over \beta +1}} \rf $ and  for $n$ large enough, it holds that $l'\e_{n} - \h \le {1\over 4}$ thanks to relations \eqref{eqPropRVSC1} and \eqref{eqPropRVSC2}. Thus, for every $l \in \ldb 0, l'\rdb$, the quantity $Q_{\beta}(m,l)$ is a term of the sequence \eqref{eqFractionalParts}.  Moreover, it holds that  $ Q_{\beta}(m,l') \;> h $. We can then use the relations in \eqref{eqPropRVSC1} in order to prove that, for every  $l \in\ldb 0, l'\rdb$, 
	\begin{equation*}  
	\begin{split}
	  \lav Q_{\beta}\lc m(n), l+1 \rc - Q_{\beta}\lc m(n), l \rc \rav 
	 \ll& \quad  h^{1\over 1+\beta} \cdot \lc {1\over n} \rc^{\beta \over \beta +1}  + h^{\beta \over \beta +1} \cdot \lc {1\over n} \rc^{1\over \beta +1} + {1\over n}  \\
	 =:& \quad \h\lc n, h \rc.
	 \end{split}
	 \end{equation*}
	
	Since $\lav Q_{\beta}\lc m(n),0 \rc \rav \ll 1/n$ and $ Q_{\beta}\lc m(n), l' \rc \ge h$, the last inequality yields that the terms $ \lfp Q_{\beta}\lc m(n),l \rc \rfp_{l \in \ldb 0, l'\rdb}$ partition the interval $\lb 0, h \rb$ into subintervals with length at most $O\lc \h(n,h)\rc$. Since $\h(n,h) \to 0$ when $n \to+\oo$ and the choice of $h\;>0$ was arbitrary, one infers that the sequence \eqref{eqFractionalParts} is dense in $\R^{+}$.  
	
	\paragraph{Case 2:} Let us assume that 
	\begin{equation}\label{eqPropRVSCAssumption2}
	\liminf_{ \substack{ j \to +\oo }} \max\lfp 1, \lc a_{2j+1} - e_{2j}\rc^{\beta +1 \over 2} \rfp \cdot q_{2j}^{\beta} \cdot \lfp q_{2j}\alpha \rfp_{2}  = 0 .
	\end{equation}
	Without loss of generality, assume that $ \beta \ge 1$ as otherwise  assumption \eqref{eqPropRVSCAssumption1} dealt with in Case 1 holds. 
	We will follow arguments similar to the first case. 	Fix $n\in\N$.	Then, there exists $m= m(n) \in 2\N$  such that 
	\begin{equation}\label{eqPropRVSC3}
	\e_{n} := q_{m}^{\beta}\cdot \lfp q_{m}\alpha \rfp_{2}\le {1\over n} \quad \text{and}\quad  
	\lc a_{m+1} - e_{m} \rc^{\beta +1 \over 2}\cdot \e_{n}  \le {1\over n} \cdotp
	\end{equation}
	Define  $\h$ as in \eqref{eqPropRVSC1} in such a way that  $|\h|\le \e_{n}$. Also, set $\h'  = q_{m}^{\beta}\cdot \lfp q_{m+1}\alpha \rfp_{2}$, wherefrom it follows that $|\h'| \le \e_{n}$. 
	
	Given $l\ge 1$  such that   $l\cdot \e_{n} + \lc a_{m+1} - e_{m} \rc\cdot\e_{n} 	-\h' - \h \;< {1 \over 2} $, let
	\begin{equation*}
	\begin{split}
	P_{\beta}(m,l) =&  \lc \sum_{j=0}^{m-1} e_{j}\cdot q_{j}  + l \cdot q_{m} - q_{m-1} \rc^{\beta}\cdot \\ &\qquad \qquad \lfp \lc \sum_{j=0}^{m-1}e_{j}\cdot q_{j} + l\cdot q_{m} - q_{m-1} \rc\cdot \alpha - \rho \rfp_{2}\\
	\underset{\eqref{eqOstrowskiExpansion}}{=}&   \lc \sum_{j=0}^{m-1}   e_{j}\cdot q_{j}  + l \cdot q_{m} - q_{m-1} \rc^{\beta} \cdot  \\ &  \qquad
	\lfp   \lc l+ a_{m+1} -e_{m}\rc\cdot \lfp q_{m}\alpha \rfp_{2}  - \lfp q_{m+1}\alpha \rfp_{2}   - \sum_{j=m+1}^{+\oo} e_{j}\cdot\lfp q_{j} \alpha \rfp_{2}    \rfp_{2}  \\
	 \underset{\eqref{eqPropRVSC2},\eqref{eqPropRVSC3}}{=} & \lc { \sum_{j=0}^{m-1} e_{j}\cdot q_{j}  - q_{m-1} \over q_{m} } + l \rc^{\beta} \cdot \lc  l\e_{n}  + \lc a_{m+1} - e_{m} \rc \cdot \e_{n} - \h' - \h \rc .
	\end{split}
	\end{equation*}
	As in the previous case, the Ostrowski expansion of a natural number yields
	$$   \lav { \sum_{j=0}^{m-1} e_{j}\cdot q_{j}  - q_{m-1} \over q_{m} } \rav \le 1 .$$
	In turn, from inequalities \eqref{eqPropRVSC3} one infers that $\lav P_{\beta}(m,0) \rav \ll 1/n$. 
	
	Fix $h\;>0$. For $l' = 4 \cdot \lf h^{1 \over \beta +1} \cdot \e_{n}^{-{1 \over \beta +1}} \rf$ and $n$ large enough, inequalities \eqref{eqPropRVSC3} and \eqref{eqPropRVSC2}  imply that $l'\cdot \e_{n} + \lc a_{n+1} - e_{n} \rc\cdot\e_{n} 	-\h' - \h \le {1 \over 4} $. Thus, given $l\in\ldb 0,l'\rdb$, the quantity $P_{\beta}(m,l)$ is a term in the sequence \eqref{eqFractionalParts}. Moreover, it holds that $ P_{\beta}\lc m,l' \rc  \;> h $. We can then use the relations in \eqref{eqPropRVSC3} in order to prove that, for every $l \in \ldb 0, l'\rdb$, 
	$$ \lav P_{\beta}\lc m(n),l+1 \rc - P_{\beta}\lc m(n),l \rc \rav \ll h^{\beta \over \beta +1} \cdot \lc {1\over n} \rc^{1\over \beta+1} + h^{\beta-1 \over \beta +1} \cdot \lc {1\over n} \rc^{2\over \beta +1}   \quad =: \h(n,h).$$
	
	Since $\lav P_{\beta}\lc m(n), 0 \rc \rav \ll 1/n$  and $P_{\beta}\lc m(n), l'\rc \;> h$, the last inequality yields that the terms $ \lfp P_{\beta}\lc m(n),l \rc \rfp_{l \in \ldb 0, l'\rdb}$ partition the interval $\lb 0, h \rb$ into subintervals with length at most $O\lc \h(n,h)\rc$.
	Since $\h(n,h) \to 0$ when $n \to+\oo$ and the choice of $h\;>0$ was arbitrary, one infers that the sequence \eqref{eqFractionalParts} is dense in $\R^{+}$.  
	
	The proof is complete.  \hfill $\blacksquare$
\end{proof}

The following corollary is a straightforward consequence of Proposition \ref{PropRealValuesSufficientConditions}.

\begin{corollary}\label{CorSmallValuesOfBeta}
	Let $\beta \in (0,1)$ be a real number. Let also $\alpha \in \R\backslash \Q$ be an irrational and let $\rho$ be a real number. Then, the sequence 
	$$ \lc k^{\beta}\cdot \lfp k\alpha -\rho \rfp_{2} \rc_{k \in \N} $$ 	
	is dense in $\R$. 	
\end{corollary}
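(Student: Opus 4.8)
The plan is to reduce the statement to Proposition~\ref{PropRealValuesSufficientConditions} after disposing of the rational case separately. First I would split according to whether $\rho$ is rational or irrational. If $\rho \in \Q$, the claim is precisely the Corollary following Proposition~\ref{PropRationalValuesFractionalParts}, so nothing remains to be done. If $\rho \in \R \backslash \Q$, then Proposition~\ref{PropRealValuesSufficientConditions} applies, and it suffices to verify that $\tau_{+}(\alpha,\beta,\rho) = 0$ and $\tau_{-}(\alpha,\beta,\rho) = 0$; the proposition then gives density in $\R^{+}$ and in $\R^{-}$ simultaneously, hence in $\R$.

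The core of the argument is to show that each term in the $\liminf$ defining $\tau_{+}(\alpha,\beta,\rho)$ already tends to $0$. The key is to discard the awkward factor $\max\lfp 1, \min\lfp e_{2n}^{\beta}, (a_{2n+1} - e_{2n})^{(\beta+1)/2} \rfp \rfp$ by the crude bound $\min\lfp x, y \rfp \le y$, which gives $\min\lfp e_{2n}^{\beta}, (a_{2n+1} - e_{2n})^{(\beta+1)/2} \rfp \le a_{2n+1}^{(\beta+1)/2}$ since $e_{2n} \ge 0$; as $a_{2n+1} \ge 1$ the $\max$ factor is then at most $a_{2n+1}^{(\beta+1)/2}$. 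Combining this with the standard estimates $\lfp q_{2n}\alpha \rfp_{2} = \ldav q_{2n}\alpha \rdav \le 1/q_{2n+1}$ and $q_{2n+1} \ge a_{2n+1}\, q_{2n}$ from the theory of continued fractions, the $n$-th term is bounded above by
$$ a_{2n+1}^{(\beta+1)/2} \cdot q_{2n}^{\beta} \cdot \frac{1}{a_{2n+1}\, q_{2n}} = \frac{q_{2n}^{\beta-1}}{a_{2n+1}^{(1-\beta)/2}} \le q_{2n}^{\beta-1}. $$
Since $\beta < 1$ and $q_{2n} \to +\oo$, this upper bound tends to $0$, whence $\tau_{+}(\alpha,\beta,\rho) = 0$. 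The argument for $\tau_{-}(\alpha,\beta,\rho) = 0$ is identical, working with the odd-indexed convergents and using $-\lfp q_{2n-1}\alpha \rfp_{2} = \ldav q_{2n-1}\alpha \rdav \le 1/q_{2n} \le 1/(a_{2n}\, q_{2n-1})$.

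The step I expect to require the most care is precisely the interaction of the three exponents. The only place where the hypothesis $\beta \in (0,1)$ enters is in checking that the exponent $(\beta+1)/2$ appearing in the definition of $\tau_{\pm}$ is strictly smaller than $1$, so that the loss $a_{2n+1}^{(\beta+1)/2}$ incurred by the $\max$ factor is more than compensated by the gain $a_{2n+1}^{-1}$ coming from $q_{2n+1} \ge a_{2n+1}\, q_{2n}$; this is what leaves a surviving power $a_{2n+1}^{-(1-\beta)/2} \le 1$ together with the decaying factor $q_{2n}^{\beta-1}$. There is no genuine obstacle here, which is why the result is a straightforward consequence of Proposition~\ref{PropRealValuesSufficientConditions}; the only point to keep in mind is to treat the case $\rho \in \Q$ separately, since that proposition is stated only for irrational $\rho$.
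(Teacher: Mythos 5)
Your proof is correct and follows essentially the same route as the paper: both reduce to Proposition \ref{PropRealValuesSufficientConditions} by showing $\tau_{\pm}(\alpha,\beta,\rho)=0$ using $\ldav q_{n}\alpha \rdav \le 1/(a_{n+1}q_{n})$ together with $\beta<1$, the only cosmetic difference being that the paper discards the $\min$ through its first branch ($e_{2n}^{\beta}\le a_{2n+1}^{\beta}$) while you use the second branch ($(a_{2n+1}-e_{2n})^{(\beta+1)/2}\le a_{2n+1}^{(\beta+1)/2}$). Your separate treatment of rational $\rho$ is a sensible precaution, since Proposition \ref{PropRealValuesSufficientConditions} is stated only for irrational $\rho$, a point the paper's own proof glosses over.
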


\begin{proof}
	Let $\beta,\alpha,\rho$ be as in the statement.  Assume that $\lc e_{j}\rc_{j\ge 0}$ is the sequence of the digits in the Ostrowski	 expansion of $\rho$. From the theory of continued fractions, for every $n \in\N$, it holds that $\ldav q_{n}\alpha \rdav \le {1\over a_{n+1}q_{n}}$. Consequently,
	\begin{align*}
	\liminf_{j\to +\oo} q_{2j}^{\beta}\cdot \max\lfp e_{2j}^{\beta} , 1 \rfp \cdot \lfp q_{2j}\alpha \rfp_{2} 
	&\le  \liminf_{j\to +\oo}  \max\lfp {  e_{2j}^{\beta} \cdot q_{2j}^{\beta}  \over a_{2j+1}\cdot q_{2j}}, {q_{2j}^{\beta}  \over a_{2j+1}\cdot q_{2j}} \rfp \\
	&\le  \liminf_{j \to +\oo} {1 \over q_{2j}^{1-\beta}} = 0 .
	\end{align*}
	Similarly, we can show that  $ \underset{j\to +\oo}{\liminf} - q_{2j+1}^{\beta}\cdot \max\lfp 1, e_{2j+1}^{\beta} \rfp \cdot \lfp q_{2j+1}\alpha \rfp_{2} = 0 .$
	Therefore, Proposition \ref{PropRealValuesSufficientConditions} implies that the sequence \eqref{eqFractionalParts} is dense in $\R$.  The proof is complete.  	\hfill $\blacksquare$
\end{proof}

Proposition \ref{PropOscillatingSequencesAndFractionalParts} and Corollary \ref{CorSmallValuesOfBeta} immediately imply Corollary \ref{CorOscillatingSequenceSmallValuesOfBeta}.

\subsection{Effective Construction of the Parameter $\rho$} 

The sufficient condition in the statement of Proposition \ref{PropRealValuesSufficientConditions} is not necessary. Indeed, in the following proposition we construct real numbers $\rho \in \R$ such that the sequence \eqref{eqFractionalParts} is dense in $\R$ but with $\tau_{\pm}(\alpha,\beta,\rho) = +\oo$.

\begin{proposition}
	Let $\beta \;>0$ be a positive number and $\alpha$ be an irrational such that $\mu_{+}(\alpha,\beta)$ and $\mu_{-}(\alpha,\beta)$ equal either zero or infinity. Then, there exists an effectively constructible sequence of digits $ \lc e_{j} \rc_{j\ge 0}$ in the Ostrowski expansion \eqref{eqOstrowskiExpansion} of the real number $\rho$ such that the sequence $\lc w_{k} \rc_{k \in\N}$ defined in \eqref{eqFractionalParts} is dense in $\R$.	Moreover, there exist uncountably many such numbers $\rho$.
\end{proposition}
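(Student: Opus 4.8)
The plan is to build $\rho$ through its Ostrowski digits \eqref{eqOstrowskiExpansion} and to feed the resulting partial sums $\kk_{n}$ into the same \emph{approximate-then-sweep} scheme that underlies Proposition \ref{PropRealValuesSufficientConditions}, but decoupled from the size constraints that force $\tau_{\pm}$ to vanish. First I would isolate the role of the hypothesis $\mu_{\pm}(\alpha,\beta)\in\lfp 0,+\oo\rfp$. On a side where $\mu_{\pm}(\alpha,\beta)=+\oo$, the inequality $\mu_{\pm}(\alpha,\beta)\le\tau_{\pm}(\alpha,\beta,\rho)$ (Lemma \ref{LemIrrationalityEvaluationRationalValues}) already gives $\tau_{\pm}(\alpha,\beta,\rho)=+\oo$ for free; on a side where $\mu_{\pm}(\alpha,\beta)=0$ there is a supply of homogeneous best approximations $b$ with $\lfp b\alpha\rfp_{2}$ of the prescribed sign and $b^{\beta}\ldav b\alpha\rdav\to 0$, and these are the fine increments the density proof consumes. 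Moreover the difference of two equally good inhomogeneous approximations of $\rho$ is itself a homogeneous approximation, so $\mu(\alpha,\beta)=+\oo$ would preclude density for \emph{every} $\rho$; hence the substantive case is $\mu(\alpha,\beta)=0$, and by symmetry I may assume $\mu_{+}(\alpha,\beta)=0$ and treat $\R^{-}$ in the mirror-image way.

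I would then place the nonzero digits of $\rho$ in widely separated windows, leaving long runs of zeros between them. The long runs do the approximating: if $\kk_{N}$ is a partial sum ending just before such a run, then by \eqref{eqOstrowskiResidue} one has $\lfp\kk_{N}\alpha-\rho\rfp_{2}=-\sum_{j>N}e_{j}\lfp q_{j}\alpha\rfp_{2}$, whose leading term is the first nonzero digit after the run. Pushing that digit out to a level $N'\gg N$ makes $\ldav\kk_{N}\alpha-\rho\rdav$, and hence $w_{\kk_{N}}=\kk_{N}^{\beta}\lfp\kk_{N}\alpha-\rho\rfp_{2}$, as small as desired, while the parity of $N'$ fixes the sign of $\lfp\kk_{N}\alpha-\rho\rfp_{2}$. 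Choosing odd and even values of $N'$ along two interleaved subsequences yields base points realising $\mu_{+}(\alpha,\beta,\rho)=\mu_{-}(\alpha,\beta,\rho)=0$; these are the starting points of the sweeps.

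For density I would rerun the increment estimate from the proof of Proposition \ref{PropRealValuesSufficientConditions} with the base point $\kk_{N}$ in place of the origin. Given a target $h>0$ and a tolerance $\delta>0$, I first select a base point $\kk_{N}$ with $0<w_{\kk_{N}}<h$ (arbitrarily close to $0$ when $h$ is small, using $\mu_{+}(\alpha,\beta,\rho)=0$), and then a homogeneous approximation $b$ from the $\mu_{+}=0$ side that is fine \emph{relative to this fixed} $\kk_{N}$, namely $\kk_{N}^{\beta}\ldav b\alpha\rdav<\delta$ and $b^{\beta}\ldav b\alpha\rdav<\delta$. Sweeping $k=\kk_{N}+jb$ with $j=0,1,2,\dots$, the values $w_{k}$ increase past $h$ in steps comparable to $\lav Q_{\beta}(m,l+1)-Q_{\beta}(m,l)\rav$, which the cited proof bounds by $o(1)$; since $b$ may be taken as fine as one wishes for each fixed $\kk_{N}$, the steps near $h$ stay below $\delta$. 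The symmetric construction on the negative base points fills $\R^{-}$, so $\lc w_{k}\rc$ is dense in $\R$.

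It remains to force $\tau_{+}(\alpha,\beta,\rho)=+\oo$ (and $\tau_{-}$ symmetrically) and to produce uncountably many $\rho$, both compatibly with the windows above. At the even levels $2n$ where $a_{2n+1}$ is large — these are exactly the levels that make $q_{2n}^{\beta}\ldav q_{2n}\alpha\rdav$ small, and they exist because $\mu_{+}(\alpha,\beta)=0$ — I set $e_{2n}$ near the middle of $\ldb 0,a_{2n+1}\rdb$, so that $\min\lfp e_{2n}^{\beta},\lc a_{2n+1}-e_{2n}\rc^{\frac{\beta+1}{2}}\rfp\cdot q_{2n}^{\beta}\ldav q_{2n}\alpha\rdav\to+\oo$; at all other even levels I keep $e_{2n}=0$, where the $\tau_{+}$-factor is simply $q_{2n}^{\beta}\ldav q_{2n}\alpha\rdav\asymp q_{2n}^{\beta-1}/a_{2n+1}\to+\oo$ for $\beta\ge 1$. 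The main obstacle is precisely the compatibility of these forced middle digits with the zero runs of the second step: a middle digit sitting inside a run contributes $\asymp 1/q_{2n}$ to the tail, so I must separate a base level $N$ from the next forced level $2n$ enough that $q_{2n}$ exceeds $q_{N+1}^{\beta}$, which is possible because the large-partial-quotient levels are sparse and the windows are widely spaced; this is where $\beta\ge 1$ and the dichotomy $\mu_{\pm}\in\lfp 0,+\oo\rfp$ are genuinely used. Uncountability is then immediate: along a sparse auxiliary subsequence of run levels I may independently insert or omit a single harmless digit without affecting any approximation $\kk_{N}$ or the $\tau_{\pm}$-estimates, which embeds $\lfp 0,1\rfp^{\N}$ into the set of admissible $\rho$.
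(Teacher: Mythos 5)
There is a genuine gap, and it sits at your very first reduction. You assert that the difference of two equally good inhomogeneous approximations of $\rho$ is a good homogeneous approximation, conclude that $\mu(\alpha,\beta)=+\oo$ would preclude density for every $\rho$, and thereby discard the case $\mu_{+}(\alpha,\beta)=\mu_{-}(\alpha,\beta)=+\oo$. This claim is false, and the discarded case is precisely the substantive case of the proposition. If $k_{1}<k_{2}$ satisfy $k_{i}^{\beta}\ldav k_{i}\alpha-\rho\rdav\le\e$, the triangle inequality only yields $(k_{2}-k_{1})^{\beta}\ldav (k_{2}-k_{1})\alpha\rdav\le 2\e\lc k_{2}/k_{1}\rc^{\beta}$, which is useless when the good approximants are widely spaced --- and in any construction of this kind they are, since the quality of $\kk_{m(j)}$ as an approximant is governed by the astronomically larger level of the next nonzero Ostrowski digit. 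Inhomogeneous approximation to a well-chosen $\rho$ can be far better than homogeneous approximation; the paper's proof exploits exactly this. In the case $\mu_{\pm}(\alpha,\beta)=+\oo$ it fixes a dense sequence $\lc b_{j}\rc_{j\in\N}$ in $\R$, picks levels $m(j)$ with $q_{m(j)}^{\beta}\lfp q_{m(j)}\alpha\rfp_{2}\ge 5b_{j}$ (this largeness is what makes the achievable values of $\kk_{m(j)}^{\beta}\sum_{n>m(j)}e_{n}\lfp q_{n}\alpha\rfp_{2}$ cover a neighbourhood of $-b_{j}$ with mesh tending to $0$), and then tunes the digits on a long block after $m(j)$ so that $w_{\kk_{m(j)}}\to b_{j}$; no sweeping by multiples of a homogeneous approximant is involved.

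Concretely, your second and third paragraphs reach a target $h$ only by sweeping $k=\kk_{N}+jb$ with $b^{\beta}\ldav b\alpha\rdav<\delta$, and such $b$ exist only on a side where the homogeneous quantity vanishes. So in the mixed case (say $\mu_{+}(\alpha,\beta)=0$ and $\mu_{-}(\alpha,\beta)=+\oo$) your ``mirror image'' fills only $\R^{+}$ and leaves $\R^{-}$ uncovered, while in the case $\mu_{+}(\alpha,\beta)=\mu_{-}(\alpha,\beta)=+\oo$ nothing is covered at all. Two smaller points: forcing $\tau_{\pm}(\alpha,\beta,\rho)=+\oo$ is not part of the statement (in the $+\oo$ cases it is automatic from $\mu_{\pm}(\alpha,\beta)\le\tau_{\pm}(\alpha,\beta,\rho)$), and your last paragraph invokes $\beta\ge 1$ although the statement allows all $\beta>0$. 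The ingredients you do carry out correctly --- making $\ldav\kk_{N}\alpha-\rho\rdav$ small with prescribed sign by postponing the next nonzero digit, and the uncountability via independent harmless digit choices --- do match the paper, but the core mechanism for the infinite cases is missing.
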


\begin{proof}
	We split the proof into three cases depending on the values of $\mu_{\pm}(\alpha,\beta)$.
	
	\paragraph{Case 1:} Assume that $\mu_{+}(\alpha,\beta)= \mu_{-}(\alpha,\beta) = 0$. Then, the result follows easily from Proposition \ref{PropRealValuesSufficientConditions}. For instance, for every $j\ge 0$, we can choose $e_{j} \in \lfp 0,1\rfp$ so that the resulting sequence is dense in $\R$.
	
	\paragraph{Case 2:} Assume that $\mu_{+}(\alpha,\beta) = \mu_{-}(\alpha,\beta) = +\oo$; that is, that
	\begin{equation}\label{eqPropRVECAssumption1} 
	\liminf_{n \to +\oo}	q_{2n}^{\beta}\cdot \lfp q_{2n}\alpha \rfp_{2} = \liminf_{n \to +\oo} -	q_{2n+1}^{\beta}\cdot \lfp q_{2n+1}\alpha \rfp_{2} = +\oo .
	\end{equation}
	
	Fix a sequence $\bm{b}= \lc b_{j} \rc_{j\in \N}$ of real numbers which is dense in $\R$.
	The goal is to define the sequence $\lc e_{j} \rc_{j\ge 0}$ recursively in such a way that 
	\begin{equation}\label{eqPropRVEC1}
	\lav b_{j} - w_{\kk_{m(j)}} \rav \underset{j \to +\oo}{ \longrightarrow} 0 ,
	\end{equation}
	where $ \lc \kk_{m(j)} \rc_{j \in \N}$ is  a proper subsequence of the sequence \eqref{eqOstrowskiSequence} defined in the course of the proof below. Relation \eqref{eqPropRVEC1} then yields the density of sequence \eqref{eqFractionalParts}.
	
	Choose $\e_{0} \in \ldb 0, a_{1}-1 \rdb$ arbitrary and fix $ j\in \N$. If $j= 1$, then, without loss of generality, assume that $b_{1} \;>0$. From equation \eqref{eqPropRVECAssumption1}, there exists $m(1) \in 2\N$ such that $  q_{m(1)}^{\beta}\cdot \lfp q_{m(1)}\cdot\alpha \rfp_{2} \ge 5 b_{1} $. Given $n \in \ldb 1, m(1) -1 \rdb$, set $e_{n} =0$ and choose $e_{m(1)} \in \ldb 1, a_{m(1)+1} \rdb$ arbitrary. Fix $l(1) \in \N$ large enough. From equation \eqref{eqContinuedFractionsIntervals} and the choice of $m(1)$, for every $n \in \ldb m(1) +1 ,m(1) + l(1) \rdb$, the digits $e_{n} \in \ldb 0 , a_{n+1} \rdb$ can be chosen in such a way that 
	$$ \lav b_{1} + \kk_{m(1)}^{\beta}\cdot \sum_{n= m(1)+1}^{m(1)+l(1)} e_{n}\cdot \lfp q_{n}\alpha \rfp_{2} \rav \le {1\over 2} \cdotp $$

	If $j\ge 2$, then assume that the numbers  $m(j-1),l(j-1) \in \N$ have been chosen in such a way  that, for every $n \in\ldb 1, m(j-1) +l(j-1) \rdb$, the digits $e_{n} \in \ldb 0 , a_{n+1} \rdb$ are such that for every $j' \in \ldb 1, j-1 \rdb$, 
	$$ \lav b_{j'} + \kk_{m(j')}^{\beta}\cdot \sum_{n= m(j')+1}^{m(j')+ l(j')} e_{n}\cdot \lfp q_{n} \alpha \rfp_{2} \rav \le {1 \over 2j'} $$
	and 
	$$ \kk_{m(j'-1)}^{\beta}\cdot \ldav q_{m(j')-2} \cdot \alpha \rdav \le {1 \over 	2(j'-1)} \cdotp $$

	Without loss of generality, assume that $b_{j} \ge 0$. From equation \eqref{eqPropRVECAssumption1}, there exists $m_{j} \in 2\N$  such that $m(j) \ge m(j-1) + l(j-1) +1$,
	\begin{equation}\label{eqPropRVEC2}
	q_{m(j)}^{\beta}\cdot \lfp q_{m(j)}\cdot \alpha \rfp_{2} \ge 5b_{j} \quad \text{and}\quad  \kk_{m(j-1)}^{\beta}\cdot \ldav q_{m(j)-2} \cdot \alpha \rdav \le {1 \over 	2(j-1)} ,
	\end{equation}
	where the last inequality holds if $m(j) $ is chosen large enough.	
	Here, the constant $5$ in the left inequality ensures that the choice of the digits $e_{n}$ in the next step of the induction satisfies the properties of the Ostrowski expansion (as given in  relation \eqref{eqOstrowskiExpansion}).
	
	Given $n \in \ldb m(j-1) + l(j-1)+1 , m(j) -1 \rdb$, set $e_{n} =0 $ and choose $e_{m(j)} \in \ldb 1, a_{m(j)+1} \rdb$ arbitrary. 
	Fix  $l(j) \in \N$ large enough. From equation \eqref{eqContinuedFractionsIntervals} and the left inequality of \eqref{eqPropRVEC2}, for every $n \in \ldb m(j)+1, m(j) + l(j) \rdb$, the digits $e_{n} \in \ldb 0, a_{n+1} \rdb$ can be chosen in such a way  that 
	$$ \lav  b_{j} +  \kk_{m(j)}^{\beta}\cdot \sum_{n= m(j)+1}^{m(j)+l(j)} e_{n}\cdot \lfp q_{n} \alpha \rfp_{2} \rav \le {1 \over 2j} \cdotp $$	
	In the case where $b_{j} \;< 0$, one works in a similar way by choosing $m(j) \in 2\N+1$ large enough. Therefore, we have defined the sequence $\lc e_{n} \rc_{n\in \N}$ and can thus set 
	$\rho = e_{0}\cdot \lfp \alpha \rfp +  \sum_{n=1}^{+\oo} e_{n} \cdot \lfp q_{n} \alpha \rfp_{2}$. It is not hard to check that for every $j \in \N$, it holds that 
	$$ \lav b_{j} - w_{\kk_{m(j)}} \rav \le {1\over j} .$$
	The claim is thus proved.
	
	\paragraph{Case 3:} Assume that one of the quantities $\mu_{\pm}(\alpha,\beta)$ equals zero and the other one equals infinity. For instance, assume that $\mu_{+}(\alpha,\beta) = +\oo$ and $\mu_{-}(\alpha,\beta) =0$.  Fix a sequence $\bm{b} =\lc b_{j} \rc_{j\in \N}$ of real numbers which is dense in $\R^{+}$. We follow the steps in the proof of the second case but this time we choose $m(j)\in 2\N$ large enough so that $q_{m(j)-1}^{\beta} \cdot \ldav q_{m(j)-1}\alpha \rdav \underset{ j \to +\oo}{\longrightarrow} 0 $ and 
	$e_{m(j)-1} \in \lfp 0, 1 \rfp$. The density in $\R^{+}$ follows from the arguments presented in the second case, and the density in $\R^{-}$ follows from Proposition \ref{PropRealValuesSufficientConditions}. When $\mu_{+}(\alpha,\beta) =0$ and $\mu_{-}(\alpha,\beta) = +\oo$, one works similarly.

	The arguments in all three cases imply easily the construction of uncountably many such numbers $\rho$.
	The proof is complete.  \hfill $\blacksquare$
	
\end{proof}

\begin{remark}
	Given $\beta\;>0$ and an irrational $\alpha$, it can be shown that there exist (uncountably many) real numbers $\rho$ such that the sequence \eqref{eqFractionalParts} is dense in $\R$. However, if at least one of the quantities $\mu_{\pm}(\alpha,\beta)$ is positive and finite, then, it is not clear to the author how one can effectively construct the digits $\lc e_{j} \rc_{j\in \N}$ of the Ostrowski expansion \eqref{eqOstrowskiExpansion} of the real $\rho$. Note that given $\alpha \in \R\backslash \Q$, there exists at most one real number $\beta_{+}\;>0$ (resp. $\beta_{-} \;>0$) such that $\mu_{+}\lc\alpha,\beta_{+}\rc \in (0,+\oo)$ (resp. $\mu_{-}\lc \alpha,\beta_{-} \rc \in (0,+\oo)$). 
\end{remark}

\section{Proof of Theorems \ref{TheorOscillatingSequence} and \ref{TheorIrrationalityEvaluationRationalValues} }\label{SectionMainResults}

We are now ready to prove Theorem \ref{TheorOscillatingSequence} and Corollary \ref{CorTrigonometricOscillatingSequence}.

\begin{proof}[Theorem \ref{TheorOscillatingSequence}]
	As far as the first part of the theorem is concerned, assume that the sequence $\lc y_{k} \rc_{k\in \N}$ defined in \eqref{eqOscillatingSequences} is dense in $\R^{+}$. Then, there exists an increasing sequence of natural numbers $\lc k_{n} \rc_{n \in \N}$ such that, for every $n \in \N$, $ F\lc k_{n}\alpha \rc \;>0 $ and  $ g\lc k_{n} \rc\cdot F\lc k_{n}\alpha \rc \le {1 \over n}$. By passing to a subsequnce if necessary, the continuity of $F$ yields that $ \ldav k_{n}\alpha - r\rdav \underset{ n \to +\oo}{ \longrightarrow} 0$ for some root $r$ of $F$, and the claim follows. Work similarly in the case where $\lc y_{k} \rc_{k \in \N}$ is dense in $\R^{-}$. In the special case where the root $r$ is rational, an immediate application of Propositions \ref{PropRationalValuesFractionalParts} and \ref{PropOscillatingSequencesAndFractionalParts} implies the claim.
	
	The second part of the theorem follows from Propositions \ref{PropRealValuesSufficientConditions} and \ref{PropOscillatingSequencesAndFractionalParts}.   \hfill $\blacksquare$
\end{proof}

\begin{proof}[Corollary \ref{CorTrigonometricOscillatingSequence}] The function $F(x) = \sin\lc 2\pi \cdot x \rc$ is easily seen to satisfy assumption \eqref{eqPeriodicFunction}. Moreover, all its roots are rationals. The result now follows upon noticing that, given $\alpha$ irrational, $\beta \;>0$ and $\rho$ a rational number, $\mu_{+}\lc \alpha,\beta,\rho \rc = 0$ (resp.  $\mu_{-}\lc\alpha,\beta, \rho\rc = 0$) implies $\mu_{+}(\alpha,\beta) = 0$ (resp.  $\mu_{-}(\alpha,\beta) =0$). This claim follows from Theorem \ref{TheorIrrationalityEvaluationRationalValues}.
	The proof of the corollary is complete.  	 \hfill $\blacksquare$
\end{proof}

We now prove Thoerem \ref{TheorIrrationalityEvaluationRationalValues}.

\begin{proof}[Theorem \ref{TheorIrrationalityEvaluationRationalValues}]
	We will prove only the case dealing with the quantities 
	$$ \lim_{ \substack{j \to +\oo,\\ q|q_{j}, j \in 2\N} } q_{j}^{\beta}\cdot\lfp q_{j}\alpha \rfp_{2} $$
	and $\mu_{+}\lc\alpha,\beta,\rho\rc$. The other case is similar.
	
	\paragraph{}
	$\Rightarrow:$ Fix $\e'\;>0$ and let $\lc q_{n} \rc_{n\in\N}$ be the sequence of denominators of convergents of $\alpha$. Assume that 
	$$ \lim_{ \substack{j \to +\oo,\\ q|q_{j}, j \in 2\N} } q_{j}^{\beta}\cdot\lfp q_{j}\alpha \rfp_{2} = 0 .$$
	Then, there exists $n \in 2\N$  such that $q|q_{n}$ and, for this $q_{n}$, it holds that 
	\begin{equation}\label{eqTheorIERV1}
	0\;< q_{n}^{\beta} \cdot \lfp q_{n}\alpha\rfp_{2} := \e \le \e' .
	\end{equation}
	Since $n$ is even, the theory of continued fractions implies that $\lfp q_{n}\alpha \rfp_{2} \;>0$. We obtain immediately that 
	\begin{equation}\label{eqTheorIERV2}
	\alpha = { p_{n} \over q_{n}} + {\e \over q_{n}\cdot q_{n}^{\beta}} 
	\end{equation}
	for some $p_{n} \in \Z$ with $(p_{n} ,q_{n})=1$. Write $q_{n} = q\cdot q_{n}'$ for some $q_{n}'\in \N$ and choose $p_{n}'\in \lfp 1,...,q-1 \rfp$ such that $ p_{n}'\cdot p_{n} \equiv p \pmod q $. Then,
	\begin{align*}
	\lc p_{n}'\cdot q_{n}'\rc^{\beta}\cdot \lfp p_{n}'q_{n}'\cdot \alpha - \rho \rfp_{2} &\underset{\eqref{eqTheorIERV2} }{=} \lc p_{n}'\cdot q_{n}' \rc^{\beta} \cdot \lfp p'_{n}\cdot q'_{n}\cdot \lc {p_{n} \over q_{n}} + {\e \over q_{n}\cdot q_{n}^{\beta}} \rc - {p \over q} \rfp_{2} \\
	& \underset{ \lc \substack{p'_{n}\cdot p_{n} \equiv p  \\ \pmod q} \rc}{=}  \lc {p'_{n}\cdot q'_{n} \over q_{n} }\rc^{1+\beta} \cdot \e  \underset{ \lc p \;< q \rc}{\le} \e .
	\end{align*}
	This implies that $\mu_{+}\lc \alpha,\beta,{p\over q} \rc \le \e'$. Therefore, $\mu_{+}\lc \alpha,\beta, {p\over q} \rc = 0$ as $\e'$ is chosen arbitrary.
	
	\paragraph{}
	$\Leftarrow:$ Assume that $\mu_{+}\lc \alpha,\beta,{p \over q} \rc = 0$. Without loss of genrality, assume that $ p/q \in [0,1)$.  We prove first the case $q\ge 2$.  Fix
	\begin{equation}\label{eqTheorIERV3}
	0 \;< \e_{0} \;< { 1\over 2\cdot q^{2+\beta}} \cdotp
	\end{equation}
	By assumption, there exists $k \in \N$ such that 
	$$ 0 \;< k^{\beta}\cdot \lfp k\alpha -\theta \rfp_{2} \le \e_{0} .$$
	Set 
	$$  \e = k^{\beta}\cdot \lfp k\alpha - \theta \rfp_{2}.$$
	Then, 
	\begin{equation}\label{eqTheorIERV4}
	\lfp k\alpha \rfp  = \theta  + {\e \over k^{\beta}} = {p\over q} + { \e \over k^{\beta}} \cdotp
	\end{equation}
	From inequality \eqref{eqTheorIERV3}, one obtains that ${q\e / k^{\beta}} \in \lb -{1\over 2}, {1\over 2} \rc$. Therefore, equation \eqref{eqTheorIERV4} yields
	$$ \lfp qk\cdot \alpha \rfp_{2} = {q\e \over k^{\beta}}  \cdotp $$
	
	Let $n\in \N$ be such that $q_{n} \le qk \;< q_{n+1}$. Then, it holds that $\ldav q_{n}\alpha \rdav \le {q\e \over k^{\beta}}$. Also, 
	$$ \alpha = {p_{n} \over q_{n}} + (-1)^{n}\cdot  {\e' \over q_{n}^{1+\beta}} $$
	for some $\e' \;>0$  and $p_{n} \in \N$  with $\lc p_{n},q_{n} \rc =1$.  Indeed, from the theory of continued fractions, we have that $\alpha = {p_{n} \over q_{n}} + (-1)^{n}\cdot{ \h\over q_{n}^{2}} $ for some  $\h\;>0$. Setting $\e'= q_{n}^{\beta-1} \cdot \h$ implies that
	\begin{equation}\label{eqTheorIERV5}
	{\e' \over q_{n}^{\beta}} \le {q\e \over k^{\beta}} \cdotp
	\end{equation}
	
	Let us prove that $q|q_{n}$ and $n \in 2\N$. Choosing $\e_{0}$ sufficiently small yields that $q_{n} \ge q$. Therefore, without loss of generality, assume for the rest of the proof that $q_{n} \ge q$.
	
	Assume that $q \not| q_{n}$. Then, for every $j \in \N$, 
	$$ \ldav {j\over q_{n}} - {p\over q} \rdav \ge {1 \over qq_{n}} $$
	since ${j\over q_{n}} \neq {p\over q} \pmod 1$ for all $j \in \Z$. Thus, in order  for the relations
	\begin{equation}\label{eqTheorIERV6}
	k\cdot \lc {p_{n}\over q_{n}} + {\e' \over q_{n}^{1+\beta}} \rc = \theta +{\e \over k^{\beta} }  \pmod 1 \quad\text{or} \quad  k\cdot \lc {p_{n}\over q_{n}} - {\e' \over q_{n}^{1+\beta}} \rc = \theta +{\e \over k^{\beta}}  \pmod 1 
	\end{equation}
	to hold, it is necessary  that 
	\begin{equation}\label{eqTheorIERV7}
	k\cdot { \e' \over q_{n}^{1+\beta}}  \ge {1 \over 2  q\cdot q_{n}} \cdotp
	\end{equation}
	However, 
	$$ k\cdot { \e' \over q_{n}^{1+\beta}}  \quad \underset{\eqref{eqTheorIERV5}}{\le} \quad kq\cdot {\e \over q_{n}\cdot k^{\beta}} \quad \underset{\beta\ge 1}{\le} \quad q\cdot {\e \over q_{n}} \quad \underset{\e \;< \e_{0}}{\;<} \quad {1 \over 2 q_{n}\cdot q^{1+\beta}} \quad \le \quad {1 \over 2q_{n}\cdot q} \cdotp $$
	This contradiction establishes that $q|q_{n}$. Set now $\delta = (-1)^{n}\cdot \e'$ and write 
	$ \alpha = { p_{n} \over q_{n}} + {\delta \over q_{n}^{1+\beta}} .$ If $n$ is odd, then relation \eqref{eqTheorIERV6} holds only if inequality \eqref{eqTheorIERV7} is true. This leads again to a contradiction, establishing this way that $n$ is even and, in particular, that
	$$ \alpha = {p_{n} \over q_{n}}  + { \e' \over q_{n}^{1+\beta}} \quad \text{ with } \quad	k\cdot {\e' \over q_{n}^{1+\beta} }  = {\e \over k^{\beta}} .$$
	Finally, one has that
	\begin{align*}
	 \liminf_{ \substack{j \to +\oo,\\ q|q_{j}} } q_{2j}^{\beta}\cdot\lfp q_{2j}\alpha \rfp_{2} \quad \le& \quad q_{n}^{\beta}\cdot \lfp q_{n}\alpha \rfp_{2} \quad \underset{\eqref{eqTheorIERV5}}{ \le } \quad q_{n}^{\beta}\cdot {q\e \over k^{\beta}} \\ 
	 \underset{ q_{n} \le qk }{\le }& \quad q^{1+\beta}\cdot \e  \quad \le \quad q^{1+\beta}\cdot \e_{0} .  
	 \end{align*}
	By letting $\e_{0} \to 0$, one obtains that
	$$ \lim_{ \substack{j \to +\oo,\\ q|q_{j} } } q_{2j}^{\beta}\cdot\lfp q_{2j}\alpha \rfp_{2} = 0 .$$ 
	
	It remains to establish the case $q=1$; that is, when $ \theta \in \Z$. Assume that $\mu_{+}(\alpha,\beta) = 0$. The goal is to prove that 
	$$ \lim_{ \substack{j \to +\oo } } q_{2j}^{\beta}\cdot\lfp q_{2j}\alpha \rfp_{2} = 0 .$$ 
	The following lemma immediately implies the claim.
	
	\begin{lemma}\label{LemIrrationalityEvaluationRationalValues}
		Given $\alpha \in \R\backslash\Q$ and $\beta \ge 1$, the relations 
		$$ \mu_{+}(\alpha,\beta)  = \liminf_{ \substack{ j\to +\oo }} q_{2j}^{\beta}\cdot \lfp q_{2j}\alpha \rfp_{2}  \quad \text{and}\quad \mu_{-}(\alpha,\beta) =  \liminf_{ \substack{ j\to +\oo }} -q_{2j-1}^{\beta}\cdot \lfp q_{2j-1}\alpha \rfp_{2}  $$
		hold.	
	\end{lemma}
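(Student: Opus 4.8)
The plan is to prove, for the first identity, the two inequalities $\mu_{+}(\alpha,\beta)\le L_{+}$ and $\mu_{+}(\alpha,\beta)\ge L_{+}$, where $L_{+}:=\liminf_{j\to+\oo} q_{2j}^{\beta}\lfp q_{2j}\alpha\rfp_{2}$; the identity for $\mu_{-}(\alpha,\beta)$ then follows verbatim after exchanging the roles of even and odd convergents. Throughout I would use two standard facts from the theory of continued fractions: for $n\ge 1$ one has $\lfp q_{n}\alpha\rfp_{2}=(-1)^{n}\ldav q_{n}\alpha\rdav$ (so even convergents produce a positive signed fractional part and odd ones a negative one), and the best-approximation property $\ldav k\alpha\rdav\ge\ldav q_{n}\alpha\rdav$ whenever $q_{n}\le k<q_{n+1}$. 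The inequality $\mu_{+}(\alpha,\beta)\le L_{+}$ is then immediate: for $j$ large the denominators $q_{2j}$ satisfy $\lfp q_{2j}\alpha\rfp_{2}>0$ and tend to infinity, so $\lc q_{2j}\rc_{j}$ is a subsequence of the indices entering the liminf defining $\mu_{+}(\alpha,\beta)$, whence $L_{+}\ge\mu_{+}(\alpha,\beta)$.

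For the reverse inequality I would fix $k$ with $\lfp k\alpha\rfp_{2}>0$ and locate the unique $j$ with $q_{2j}\le k<q_{2j+2}$. If $q_{2j}\le k<q_{2j+1}$, the best-approximation property gives $\ldav k\alpha\rdav\ge\ldav q_{2j}\alpha\rdav$, and since $k\ge q_{2j}$ and $\lfp k\alpha\rfp_{2}=\ldav k\alpha\rdav$ this yields $k^{\beta}\lfp k\alpha\rfp_{2}\ge q_{2j}^{\beta}\lfp q_{2j}\alpha\rfp_{2}$ at once. The delicate range is $q_{2j+1}\le k<q_{2j+2}$, where the nearest convergent $q_{2j+1}$ lies on the wrong side of $\alpha$ and cannot be compared to $k$ directly. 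Here I would invoke the theory of one-sided (lower) best approximations: the integers $m$ at which the ordinary fractional part $\lfp m\alpha\rfp$ attains a new minimum are exactly the intermediate denominators $k_{t}:=q_{2j}+t\,q_{2j+1}$ for $0\le t\le a_{2j+2}$ (with $k_{0}=q_{2j}$ and $k_{a_{2j+2}}=q_{2j+2}$), whose signed fractional parts equal $\lfp k_{t}\alpha\rfp_{2}=\ldav q_{2j}\alpha\rdav-t\,\ldav q_{2j+1}\alpha\rdav>0$. Consequently every $k$ with $\lfp k\alpha\rfp_{2}\in(0,1/2)$ is dominated by the largest such $k_{t}\le k$, which lies in the same block $\lb q_{2j},q_{2j+2}\rc$: one has $\lfp k\alpha\rfp_{2}\ge\lfp k_{t}\alpha\rfp_{2}$ and $k\ge k_{t}$, so $k^{\beta}\lfp k\alpha\rfp_{2}\ge k_{t}^{\beta}\lfp k_{t}\alpha\rfp_{2}$.

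It then remains to bound $k_{t}^{\beta}\lfp k_{t}\alpha\rfp_{2}$ from below by the two even convergents bracketing the block. Setting $f(t)=\lc q_{2j}+t\,q_{2j+1}\rc^{\beta}\lc\ldav q_{2j}\alpha\rdav-t\,\ldav q_{2j+1}\alpha\rdav\rc$ for real $t\in\lb 0,a_{2j+2}\rb$, I would observe that $\log f$ is a sum of concave functions, namely the logarithms of two positive affine functions (one increasing, one decreasing), the first scaled by $\beta>0$; hence $f$ is log-concave and therefore unimodal, so it attains its minimum on the interval at an endpoint. Thus for every integer $t\in\ldb 0,a_{2j+2}\rdb$,
\[
f(t)\ge\min\lfp f(0),f(a_{2j+2})\rfp=\min\lfp q_{2j}^{\beta}\lfp q_{2j}\alpha\rfp_{2},\ q_{2j+2}^{\beta}\lfp q_{2j+2}\alpha\rfp_{2}\rfp ,
\]
the two endpoints being consecutive even convergents. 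Combining the displays gives, for every admissible $k$ in $\lb q_{2j},q_{2j+2}\rc$, the bound $k^{\beta}\lfp k\alpha\rfp_{2}\ge\min\lfp q_{2j}^{\beta}\lfp q_{2j}\alpha\rfp_{2},\ q_{2j+2}^{\beta}\lfp q_{2j+2}\alpha\rfp_{2}\rfp$. Letting $k\to+\oo$ forces $j\to+\oo$; writing $x_{j}:=q_{2j}^{\beta}\lfp q_{2j}\alpha\rfp_{2}$ and taking liminf yields $\mu_{+}(\alpha,\beta)\ge\liminf_{j}\min\lfp x_{j},x_{j+1}\rfp=\liminf_{j}x_{j}=L_{+}$, which together with the easy inequality proves $\mu_{+}(\alpha,\beta)=L_{+}$.

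I expect the main obstacle to be the middle step rather than the final estimate: one must justify that an arbitrary $k$ with positive signed fractional part is controlled by an intermediate fraction of comparable size (the one-sided best-approximation reduction, including the verification that the dominating $k_{t}$ stays inside the same block $\lb q_{2j},q_{2j+2}\rc$), and then that the resulting one-parameter profile $f$ cannot dip below both bracketing even convergents. The latter is precisely the log-concavity observation, which is where the interplay between $\beta>0$ and the alternating signs of the convergents is used; note also that for $\beta<1$ the statement is essentially empty since $q_{2j}^{\beta}\lfp q_{2j}\alpha\rfp_{2}\le q_{2j}^{\beta-1}\to 0$, so the content of the lemma lives in the regime $\beta\ge 1$.
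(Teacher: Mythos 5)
Your argument is correct, and its skeleton coincides with the paper's: both proofs reduce the liminf over all $k$ with $\lfp k\alpha \rfp_{2} \;> 0$ to the intermediate fractions $q_{2j} + t\cdot q_{2j+1}$, $0 \le t \le a_{2j+2}$, via the one-sided best-approximation property, and then compare the value of $k^{\beta}\cdot\lfp k\alpha\rfp_{2}$ at these points with the bracketing even convergents. Two points of divergence are worth recording. First, you quote the classification of the records of the fractional parts $\lfp m\alpha \rfp$ as a standard fact, whereas the paper devotes the bulk of its proof to establishing precisely this claim (its relation \eqref{eqLemIERV1}) by analysing the Ostrowski expansion of $k$ and ruling out all other digit patterns; in a self-contained write-up you would need to supply that argument or a precise reference, and you are right to flag it as the delicate step. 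Second, your endgame is genuinely different: the paper bounds the profile at an intermediate fraction below by the \emph{left} endpoint $q_{2j}^{\beta}\cdot\lfp q_{2j}\alpha\rfp_{2}$ alone, via the explicit estimate $a_{2j+2}^{\beta-1}\cdot t^{\beta}\cdot\lc a_{2j+2}-t\rc \ge 1$, which is where the hypothesis $\beta \ge 1$ enters; you instead note that the logarithm of your function $f$ is a sum of concave functions, so $f$ is minimized at an endpoint of $\lb 0, a_{2j+2}\rb$, giving only $f(t) \ge \min\lfp f(0), f(a_{2j+2})\rfp$, and you then absorb the extra term through the identity $\liminf_{j}\min\lfp x_{j},x_{j+1}\rfp = \liminf_{j}x_{j}$. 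Your bound is weaker pointwise but suffices for the liminf, is arguably cleaner, and has the side benefit of working for every $\beta \;> 0$ rather than only $\beta \ge 1$.
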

	
	\begin{proof}
		We prove the first relation as the second one follows in the same way. To this end, fix an even integer $n$. It is enough to prove that 
		$$ \lc q_{n}  + l \cdot q_{n+1}  \rc^{\beta} \cdot \lfp \lc q_{n} + l\cdot q_{n+1} \rc\cdot \alpha \rfp_{2} \ge q_{n}^{\beta}\cdot  \lfp q_{n} \alpha \rfp_{2} $$ 
		for every $l \in \ldb 1, a_{n+2} -1 \rdb$ since
		\begin{equation}\label{eqLemIERV1}
		0 \;< \lfp q_{n+2}\alpha \rfp_{2} \;< \lfp k\alpha \rfp_{2}\le \lfp q_{n} \alpha \rfp_{2} 
		\end{equation}
		with $k \;< q_{n+2}$ if and only if $k = q_{n}+ l\cdot q_{n+1}$ for some $l \in \ldb 0, a_{n+2}-1 \rdb$. The claim is proved at the end of the proof. Thus, for every  $l \in \ldb 1, a_{n+2} -1 \rdb$, we have 
		\begin{align*}
		\lc q_{n} + l\cdot q_{n+1} \rc^{\beta}\cdot \lfp \lc q_{n} + l\cdot q_{n+1} \rc \cdot \alpha \rfp_{2}  &= \lc q_{n} + l\cdot q_{n+1} \rc^{\beta}\cdot \lc \lfp  q_{n}\alpha \rfp_{2} + l \lfp  q_{n+1} \alpha \rfp_{2} \rc \\
		&\ge \lc a_{n+2}\cdot l \cdot q_{n} \rc^{\beta} \cdot \lc { a_{n+2} - l\over a_{n+2}} \rc \cdot  \lfp q_{n}\alpha \rfp_{2} \\
		&\ge  a_{n+2}^{\beta -1} \cdot l^{\beta}\cdot \lc a_{n+2} - l\rc \cdot q_{n}^{\beta} \cdot \lfp q_{n}\alpha \rfp_{2} \\
		&\ge  q_{n}^{\beta}\cdot \lfp q_{n} \alpha \rfp_{2} .
		\end{align*}
		It remains to prove claim \eqref{eqLemIERV1} in order to complete the proof of the lemma.
		
		Fix $k \in \ldb 1, q_{n+2} -1 \rdb$ such that $ \ldav k\alpha \rdav \le \ldav q_{n}\alpha \rdav$. The Ostrowski expansion of $k$ is of the form  $ k = \sum_{j=0}^{n+1} e_{j}\cdot q_{j} $. Let $m$ be the minimal natural number in $\ldb 0, n+1\rdb$ such that
		$ e_{m} \ge 1$.  If $m \le n-2$, then, from equation \eqref{eqContinuedFractionsIntervals}, it can easily be deduced that
		$ \ldav \sum_{j=m}^{n+1} e_{j}\cdot q_{j} \cdot \alpha \rdav \;> \ldav q_{j+2} \alpha \rdav \ge \ldav q_{n} \alpha \rdav. $
		This is a contradiction.  If $m = n-1$, then, it cannot hold that $e_{m} \ge 2$ as otherwise, from equation \eqref{eqContinuedFractionsIntervals}, one obtains that $\ldav k\alpha \rdav \ge \ldav q_{n-1} \alpha \rdav $. Thus, $e_{n-1} =1$, which implies from the definition of the Ostrowski expansion that $e_{n} \le a_{n+2} -1$. However, one has that 
		$$\ldav \lc q_{n-1} +e_{n}q_{n} + e_{n+1} q_{n+1} \rc \alpha \rdav  \ge  \ldav \lc q_{n-1} + e_{n} q_{n} \rc \alpha \rdav \underset{ (e_{n}\le a_{n+2} -1 )}{\;>}  \ldav q_{n} \alpha \rdav  , $$
		which yields again a contradiction.  If $m = n$ then $e_{n+1} \le a_{n+2} -1$. We have that $e_{n} \le 1$ since otherwise  $ \ldav \lc e_{n}q_{n} + e_{n+1} q_{n+1} \rc \alpha \rdav \;> \ldav q_{n}\alpha \rdav $. Therefore, we have proved that if $\ldav k\alpha \rdav \le \ldav q_{n}\alpha \rdav $, then $k \in \bigcup_{l=1}^{a_{n+2} -1} \lfp q_{n} +lq_{n+1} \rfp\cup \lfp l q_{n+1} \rfp $. Finally, one has  that for every $l \in \ldb 1, a_{n+2} -1 \rdb$, $\lfp lq_{n+1}\cdot \alpha \rfp_{2} \;< 0 $. Therefore, inequality \eqref{eqLemIERV1} holds if and only if $ k = q_{n} + l q_{n+1} $ with \linebreak $ l\in \ldb 1, a_{n+2} -1 \rdb$. The claim is established, which completes the proof of Lemma \ref{LemIrrationalityEvaluationRationalValues}.  	
	\end{proof}
	This concludes the proof of Theorem \ref{TheorIrrationalityEvaluationRationalValues}.   \hfill $\blacksquare$
\end{proof}

We  end this section by showing that the quantities $\mu_{\pm}(\alpha,\beta,\rho)$ (cf. Definition \ref{DefSignedIrrationalityEvaluation}) cannot be replaced in the statements of Theorems \ref{TheorOscillatingSequence} and \ref{TheorIrrationalityEvaluationRationalValues} with $\mu(\alpha,\beta,\rho)$ (cf. Definition \ref{DefSignedIrrationalityEvaluation}).

\begin{proposition}
	Given $\beta \ge 1$ and a rational number $\rho$, there exists a real $\alpha$ such that $\mu_{+}\lc \alpha,\beta,\rho \rc = 0$ and $\mu_{-}(\alpha,\beta,\rho) \;> 0$. Conversely, there exists a real $\alpha$  such that $\mu_{-}(\alpha,\beta,\rho) =0$ and $\mu_{+}(\alpha,\beta,\rho) \;>0$.	
\end{proposition}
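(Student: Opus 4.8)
The plan is to pass to the continued-fraction characterisation of $\mu_{\pm}$ supplied by Theorem \ref{TheorIrrationalityEvaluationRationalValues} and Lemma \ref{LemIrrationalityEvaluationRationalValues}, and then to build $\alpha$ by directly prescribing its partial quotients. Writing $\rho=p/q$ with $(p,q)=1$, these results reduce the claim $\mu_{+}(\alpha,\beta,\rho)=0$ (resp. $\mu_{-}(\alpha,\beta,\rho)=0$) to the statement that the even-indexed (resp. odd-indexed) convergent denominators $q_{m}$ divisible by $q$ satisfy $\liminf q_{m}^{\beta}\cdot \ldav q_{m}\alpha \rdav =0$. The whole construction then rests on the standard estimates $q_{m}^{\beta}\cdot \ldav q_{m}\alpha \rdav \le q_{m}^{\beta-1}/a_{m+1}$ and $q_{m}^{\beta}\cdot \ldav q_{m}\alpha \rdav \ge q_{m}^{\beta-1}/(a_{m+1}+2)$, which show that the size of $q_{m}^{\beta}\cdot \ldav q_{m}\alpha \rdav$ is governed by the single partial quotient $a_{m+1}$ (here $\beta\ge 1$ guarantees $q_{m}^{\beta-1}\ge 1$).

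For the first assertion I would construct an irrational $\alpha=[0;a_{1},a_{2},\dots]$ so that $q\mid q_{2j}$ for every $j\ge 1$ while every odd convergent denominator is coprime to $q$. The even-indexed partial quotients are kept bounded, each taken in $\ldb 1,q\rdb$ so as to steer the residues, and the odd-indexed partial quotients are taken enormous, say $a_{2j+1}\ge q_{2j}^{\beta}$. The divisibility is maintained by a short modular-dynamics argument: since consecutive denominators are coprime, $q\mid q_{2j}$ forces $q_{2j-1}$ to be a unit modulo $q$; consequently $q_{2j+1}=a_{2j+1}q_{2j}+q_{2j-1}\equiv q_{2j-1}\pmod q$ is again a unit \emph{regardless of the (huge) value of} $a_{2j+1}$, after which the choice $a_{2j+2}\equiv 0 \pmod q$ (i.e. $a_{2j+2}=q$) yields $q_{2j+2}\equiv 0\pmod q$. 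Thus one alternates "$\equiv 0$" at even indices and "unit" at odd indices indefinitely, the magnitude requirements on the two partial-quotient families being independent of these congruences.

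With this $\alpha$, the upper estimate gives $q_{2j}^{\beta}\cdot \ldav q_{2j}\alpha \rdav \le q_{2j}^{\beta-1}/a_{2j+1}\le 1/q_{2j}\to 0$ along the even convergents, all of which are divisible by $q$, so $\mu_{+}(\alpha,\beta,\rho)=0$. On the other hand no odd convergent is divisible by $q$, so the liminf characterising $\mu_{-}=0$ runs over the empty set, while in any case the lower estimate yields $q_{2j-1}^{\beta}\cdot \ldav q_{2j-1}\alpha \rdav \ge 1/(q+2)$ for every odd convergent; hence Theorem \ref{TheorIrrationalityEvaluationRationalValues} forces $\mu_{-}(\alpha,\beta,\rho)>0$. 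The converse assertion follows by the symmetric construction, interchanging the roles of even and odd indices: one arranges $q\mid q_{2j-1}$ with $a_{2j}$ enormous and the odd-indexed partial quotients bounded in $\ldb 1,q\rdb$, giving $\mu_{-}=0$ and $\mu_{+}>0$. Since the enormous partial quotients may be chosen freely, uncountably many such $\alpha$ are produced at no extra cost.

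The case $q=1$ (that is $\rho\in\Z$) is simpler and I would treat it separately, as then every denominator is trivially divisible by $q$ and no congruence steering is needed: one takes $a_{2j}=1$ and $a_{2j+1}\ge q_{2j}^{\beta}$ (resp. the reverse) and reads off the conclusion directly from Lemma \ref{LemIrrationalityEvaluationRationalValues}. The main obstacle, and the only genuinely delicate point, is precisely the simultaneous control exhibited above: steering $q_{m}\bmod q$ into the prescribed residue at the prescribed parity \emph{while} forcing one of the two partial-quotient families to infinity and keeping the other bounded. The observation that a large partial quotient at a denominator $\equiv 0\pmod q$ leaves the next residue undisturbed is what decouples these two requirements and makes the construction go through.
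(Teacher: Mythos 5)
Your proposal is correct, and its core mechanism coincides with the paper's: both constructions prescribe the partial quotients of $\alpha$ so that one parity class of partial quotients is enormous (of size roughly $q_{m}^{\beta-1}$ or larger) while the other is bounded, and both read off the conclusion from the relation $q_{m}^{\beta}\cdot \ldav q_{m}\alpha \rdav \asymp q_{m}^{\beta-1}/a_{m+1}$ together with the sign alternation of $\lfp q_{m}\alpha \rfp_{2}$ and the characterisation in Lemma \ref{LemIrrationalityEvaluationRationalValues}. The genuine difference is how the two arguments deal with a general rational $\rho=p/q$. The paper simply declares, citing Theorem \ref{TheorIrrationalityEvaluationRationalValues}, that it suffices to treat $\rho=0$, and then never touches the divisibility condition $q\mid q_{n}$ that this theorem attaches to the liminf; as written, that reduction only yields $\mu_{-}(\alpha,\beta,\rho)\;>0$ from $\mu_{-}(\alpha,\beta)\;>0$ (the easy implication), not $\mu_{+}(\alpha,\beta,\rho)=0$ from $\mu_{+}(\alpha,\beta)=0$. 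You instead keep the divisibility constraint in play and resolve it by the congruence-steering observation: once $q\mid q_{2j}$, the huge partial quotient $a_{2j+1}$ leaves $q_{2j+1}\equiv q_{2j-1}\pmod q$ (a unit) untouched, and the bounded choice $a_{2j+2}\equiv 0\pmod q$ restores $q\mid q_{2j+2}$, so the size requirements and the congruence requirements decouple. This is exactly the point the paper's terse reduction glosses over, so your version is the more complete argument; the only cost is the separate (trivial) treatment of $q=1$ and of the base case $q\mid q_{2}$, both of which you handle. Your auxiliary remarks (uncountability, the bound $1/(q+2)$ for the odd convergents) are correct but not needed for the statement.
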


\begin{proof}
	From Theorem \ref{TheorIrrationalityEvaluationRationalValues} it is enough to prove the claim when  $\rho=0$.
	
	Let $\alpha = \lb a_{0}; a_{1},a_{2},... \rb \in \R\backslash\Q$  be an irrational number
	whose partial quotients will be defined recursively.  Set  
	$$ y_{n} = \lb 0; a_{n+1}, a_{n+2},...\rb \in [0,1), \quad  n \in \N_{0} .$$
	Let  $ \lc {p_{n} / q_{n} } \rc_{n \in \N }$ be the sequence of convergents of $\alpha$. A standard analysis of the continued fraction expansion of $\alpha$ yields that
	$$ \lfp q_{n-1}\cdot \alpha \rfp_{2} = { (-1)^{n-1} \over q_{n} + y_{n}\cdot q_{n-1} }  \asymp { (-1)^{n-1} \over q_{n} } \asymp { (-1)^{n-1} \over a_{n}\cdot q_{n-1} } \cdotp  $$
	Therefore,
	$$  q_{n-1} \cdot \lfp q_{n-1}\cdot \alpha \rfp_{2}  \asymp { (-1)^{n-1} \over a_{n} } \cdotp $$
	Define the sequence $\lc a_{n} \rc_{n \in \N} $ as follows: for odd $n\in \N$, choose
	$ a_{n} = \lf n\cdot q_{n-1}^{\beta -1} \rf$  and for even $n \in \N$, choose $1 \le a_{n} \le C$ for some arbitrary predefined positive constant $C \ge 1$.  Then, $ \mu_{+}(\alpha,\beta) = 0  $ and $  \mu_{-}(\alpha,\beta) \;> 0 $.  \hfill $\blacksquare$	
	
\end{proof}

\section{Proof of Theorem \ref{TheorBohrSets}}\label{SectionBohrSets}

\begin{proof}[Theorem \ref{TheorBohrSets}]
	For each $j\ge 0$, write $e_{j}= e_{j}(\rho)$ and set $\mathfrak{D}$ as defined in \eqref{eqTheorBSset}. Also, let $\lc w_{k} \rc_{k\in\N}$ be the sequence defined in \eqref{eqFractionalParts}.  We prove that $ \underset{ \substack{k\to +\oo, \\ k\not\in \mathfrak{D} } }{\lim} \lav w_{k} \rav = +\oo$. This, in turn, implies that the sequence $\lc w_{k} \rc_{k\in\N}$ and the subsequence $\lc w_{k} \rc_{k \in \mathfrak{D}}$ have the same finite limit points.
	
	Fix $h\in \R$. By assumption, given $n_{h}\in \N$ large enough and given $n \ge n_{h}$, it holds that $q_{n}^{\beta}\cdot \ldav q_{n}\alpha \rdav \ge 4|h|$. Fix $n\ge n_{h}$ and let $k \in \ldb \kk_{n}, \kk_{n+1} \rdb$.  If $ k \in \ldb \kk_{n} + q_{n+1} \rdb \backslash \mc{N}_{\rho} \lc \kk_{n}+ q_{n+1}, \alpha, { \ldav q_{n}\alpha \rdav \over 1 + e_{n}^{\beta}} \rc $,  then from the definitions of the inhomogeneous Bohr set \eqref{eqBohrSets} and of the sequence \eqref{eqOstrowskiSequence}, one obtains that
	$$ \lav w_{k} \rav \ge \kk_{n}^{\beta} \cdot { \ldav q_{n}\alpha \rdav \over 1+ e_{n}(\rho)^{\beta} }  \ge   e_{n}(\rho)^{\beta} q_{n}^{\beta} \cdot { \ldav q_{n}\alpha \rdav \over 1+ e_{n}(\rho)^{\beta} }  \ge  2|h|  .$$
	Similarly, if $k \in \ldb \kk_{n} + q_{n+1} , \kk_{n+1} \rdb \backslash \mc{N}_{\rho}\lc \kk_{n+1}, \alpha, \ldav q_{n+1} \alpha \rdav \rc $, then 
	$$ \lav w_{k} \rav \ge q_{n+1}^{\beta} \cdot \ldav q_{n+1}\alpha \rdav  \ge    4|h|  ,$$	
	whence the claim.

	We now prove inclusions \eqref{eqTheorBSInclusions} in the statement of Theorem \ref{TheorBohrSets}.	It follows from the definition of the Bohr set \eqref{eqBohrSets} that
	$$ \mc{N}_{\rho}\lc \kk_{n+1},\alpha, \ldav q_{n+1} \alpha \rdav \rc\cap \ldb 1, \kk_{n} \rdb  \quad  \sub \quad \mc{N}_{\rho}\lc \kk_{n},\alpha, \ldav q_{n} \alpha \rdav \rc $$
	and 
	$$ \mc{N}_{\rho}\lc \kk_{n} + q_{n+1},\alpha, { \ldav q_{n}\alpha \rdav \over 1 + e_{n}^{\beta}} \rc \cap \ldb 1, \kk_{n} \rdb  \quad \sub   \quad \mc{N}_{\rho}\lc \kk_{n},\alpha, \ldav q_{n} \alpha \rdav \rc .$$
	Therefore, it is enough to show, on the one hand that
	\begin{equation}\label{eqTheorBS1}
	\lfp \kk_{n+1} \rfp \quad \sub  \quad \mc{N}_{\rho}\lc \kk_{n+1}, \alpha, \ldav q_{n+1}\alpha \rdav \rc \cap \ldb \kk_{n}+ q_{n+1} , \kk_{n+1} \rdb 
	\end{equation}
	and
	\begin{equation}\label{eqTheorBS2}
	\mc{N}_{\rho}\lc \kk_{n+1}, \alpha, \ldav q_{n+1}\alpha \rdav \rc \cap \ldb \kk_{n}+ q_{n+1} , \kk_{n+1} \rdb \quad  \sub \quad \kk_{n} +  \bigcup_{l =0}^{2} \lfp \lc e_{n+1}- l \rc \cdot q_{n+1} \rfp
	\end{equation} 
	and, on the other, that 
	\begin{equation}\label{eqTheorBS3}
	\begin{split}
	\mc{N}_{\rho}\lc \kk_{n}+ q_{n+1}, \alpha , { \ldav q_{n}\alpha \rdav \over 1+ e_{n}^{\beta}} \rc \cap \ldb \kk_{n}, \kk_{n} + q_{n+1} \rdb	\quad  \sub \quad \kk_{n} + \bigcup_{l=0}^{1} \lfp (l+1)q_{n}\rfp \cup \lfp q_{n+1} - l q_{n} \rfp .
	\end{split}
	\end{equation}

	As far as inclusion \eqref{eqTheorBS1} is concerned, it is easily seen that, for every $n\in \N_{0}$, it holds that $\kk_{n+1} \in  \mc{N}_{\rho}\lc \kk_{n+1},\alpha, \ldav q_{n+1} \alpha \rdav \rc$. As for inclusion \eqref{eqTheorBS2}, there is nothing to prove if $e_{n+1} =0$. Therefore, without loss of generality, assume that  $e_{n+1} \ge 1$ and $k \in \mc{N}_{\rho}\lc \kk_{n+1}, \alpha, \ldav q_{n+1}\alpha \rdav \rc \cap \ldb \kk_{n}+ q_{n+1} , \kk_{n+1} \rdb$. Set 
	$s_{n+1} = \sum_{j= n+2}^{+\oo} e_{j}(\rho) \lfp q_{j}\alpha \rfp_{2} $. Inequality \eqref{eqOstrowskiResidue} yields that $\lav s_{n+1} \rav \le \ldav q_{n+1}\alpha \rdav$.
	Since from the triangle inequality, 
	$$ \ldav k\alpha - e_{n+1}\lfp q_{n+1}\alpha\rfp_{2} \rdav \le \ldav k\alpha - e_{n+1} \lfp q_{n+1}\alpha \rfp_{2} - s_{n+1} \rdav  + \ldav s_{n+1} \rdav \le 2\ldav q_{n+1} \alpha \rdav ,$$   one obtains that 
	\begin{align*}
	 \mc{N}_{\rho}\lc \kk_{n+1}, \alpha , \ldav q_{n+1}\alpha \rdav \rc  \cap \ldb \kk_{n}+q_{n+1}, \kk_{n+1} \rdb \quad \sub \quad \kk_{n} + \mc{N}_{0}\lc e_{n+1}q_{n+1}, \alpha, 2\cdot \ldav q_{n+1} \alpha \rdav \rc .
	 \end{align*} \normalsize
	In turn, this easily implies that 	
	$$ \mc{N}_{\rho}\lc e_{n+1}q_{n+1}, \alpha, 2\cdot \ldav q_{n+1} \alpha \rdav \rc \quad \sub  \quad \lc \bigcup_{l =0}^{2} \lfp \lc e_{n+1}- l \rc \cdot q_{n+1} \rfp \rc \cup \lfp  q_{n+1} - q_{n} \rfp . $$
	Furthermore, $ q_{n+1} - q_{n} \not\in \ldb q_{n+1}, e_{n+1} q_{n+1} \rdb$, which gives the inclusion in \eqref{eqTheorBS2}.  
	
	As for the inclusion in \eqref{eqTheorBS3}, we have that $\mc{N}_{\rho}\lc \kk_{n}+ q_{n+1}, \alpha , { \ldav q_{n}\alpha \rdav \over 1+ e_{n}^{\beta}} \rc \sub \kk_{n} +\mc{N}_{0}\lc  q_{n+1}, \alpha , 2 \ldav q_{n}\alpha \rdav  \rc$. It follows easily that
	\begin{align*}
	\begin{split}
	\mc{N}_{\rho}\lc q_{n+1}, \alpha , 2 \ldav q_{n}\alpha \rdav  \rc  \quad \sub  \quad
	\lfp  q_{n}, 2 q_{n},  q_{n+1} - q_{n}, q_{n+1} \rfp .
	\end{split}
	\end{align*}
	Therefore,
	$$ 	\lfp \kk_{n} \rfp_{n \in \N} \quad \sub \quad \mathfrak{D} $$
	and
	$$ \mathfrak{D} \quad  \sub \quad \bigcup_{n=0}^{+\oo} \lc \kk_{n} + \bigcup_{l =0}^{2} \lfp \lc e_{n}-l \rc \cdot q_{n+1} \rfp \rc \cup \lc \kk_{n} + \bigcup_{l=0}^{1} \lfp (l+1)q_{n}, q_{n+1} - l q_{n} \rfp \rc    . $$
	
	The proof is complete.  \hfill $\blacksquare$
\end{proof}

This work leaves open the question of determining the density properties of the oscillating sequence \eqref{eqOscillatingSequences} defined by more general growth functions than those of the form \eqref{eqGrowthRate}.

\end{document}